\newcommand{\overbar}[1]{\mkern 1.5mu\overline{\mkern-1.5mu#1\mkern-1.5mu}\mkern 1.5mu}
\theoremstyle{plain}
\newtheorem{theorem}{Theorem}[section]
\newtheorem{corollary}[theorem]{Corollary}
\newtheorem{lemma}[theorem]{Lemma}
\theoremstyle{definition}
\newcommand{\Spvek}[2][r]{%
	\gdef\@VORNE{1}
	\left(\hskip-\arraycolsep%
	\begin{array}{#1}\vekSp@lten{#2}\end{array}%
	\hskip-\arraycolsep\right)}
\def\vekSp@lten#1{\xvekSp@lten#1;vekL@stLine;}
\def\vekL@stLine{vekL@stLine}
\def\xvekSp@lten#1;{\def\temp{#1}%
	\ifx\temp\vekL@stLine
	\else
	\ifnum\@VORNE=1\gdef\@VORNE{0}
	\else\@arraycr\fi%
	#1%
	\expandafter\xvekSp@lten
	\fi}
\begin{document}
	\title[Szeg\H{o}'S Condition on compact subsets of $\mathbb{C}$] {Szeg\H{o}'s condition on compact subsets of $\mathbb{C}$}
	\author{G\"{o}kalp Alpan}
\address{Department of Mathematics, Rice University, Houston, TX 77005, USA}
\email{alpan@rice.edu}
\subjclass[2010]{31A15 \and 42C05}
\keywords{ Szeg\H{o} condition \and Widom condition  \and orthogonal polynomials \and Parreau-Widom domain}
\begin{abstract}
Let $K$ be a non-polar compact subset  of $\mathbb{C}$ and $\mu_K$ be its equilibrium measure. Let $\mu$ be a unit Borel measure supported on a compact set which contains the support of $\mu_K$. We prove that a Szeg\H{o} condition in terms of the Radon-Nikodym derivative of $\mu$ with respect to $\mu_K$ implies that $$\inf_n \frac{\|P_n(\cdot;\mu)\|_{L^2(\mathbb{C};\mu)}}{\mathrm{Cap}(K)^n}>0.$$

We show that $\frac{\|P_n(\cdot;\mu_K)\|_{L^2(\mathbb{C};\mu_K)}}{\mathrm{Cap}(K)^n}\geq 1$ for any compact non-polar set $K$. We also prove that under an additional assumption, unboundedness of the sequence $\left(\frac{\|P_n(\cdot;\mu_K)\|_{L^2(\mathbb{C};\mu_K)}}{\mathrm{Cap}(K)^n}\right)$ implies that $K$ satisfies the  Parreau-Widom condition.

	
\end{abstract}
\maketitle
\section{Introduction}
Let $\mu$ be a unit Borel measure with an infinite compact support on $\mathbb{C}$. We denote by $P_n(z;\mu)$  the $n$-th degree monic orthonormal polynomial associated with $\mu$, i.e.,
\begin{equation}\label{infli}
 \| P_n(\cdot;\mu) \|_{L^2(\mathbb{C};\mu)}= \inf_{Q\in \mathcal{P}_n} \|Q\|_{L^2(\mathbb{C};\mu)}
\end{equation}

where  $\mathcal{P}_n$ is the set of all $n$-th degree monic (complex) polynomials and $\|\cdot \|_{L^2(\mathbb{C};\mu)}$ denotes the $L^2$ norm associated with $\mu$.

Let $d\mu(\theta)= w(\theta )\frac{d\theta}{2\pi}+ d\mu_s(\theta)$ be a unit Borel measure on $ \mathbb{T}$ whose support contains infinitely many points. Then Szeg\H{o}'s theorem (see Theorem 2.12.7 in \cite{Sim3} and p. 306 in \cite{szeg}) reads as follows:
\begin{equation}\label{loglu1}
\lim_{n\rightarrow\infty} \|P_n(\cdot;\mu)\|_{L^2(\mathbb{C};\mu)}^2= e^{\int \log{w(\theta)} \frac{d\theta}{2\pi}}.
\end{equation}

Note that, in \eqref{loglu1} the integral is either finite or $-\infty$.

For a general treatment of logarithmic potential theory, see e.g. \cite{Ransford}, \cite{saff}. Let us denote the logarithmic capacity by $\mathrm{Cap}(\cdot)$. For a non-polar compact subset $K$ of $\mathbb{C}$, we denote the equilibrium measure of $K$ by $\mu_K$. For the component of $\overline{\mathbb{C}}\setminus K$ that contains $\infty$, we use $\Omega_K$. By $g_{\Omega_K}(z):= g_{\Omega_K}(z;\infty)$, we mean the Green function for the domain $\Omega_K$ at infinity. If $K$ is a regular compact subset of $\mathbb{C}$ with respect to the Dirichlet problem, let $$\mathrm{PW}(K):=\sum g_{\Omega_K}(c_n)$$
where $\{c_n\}_n$ is the set of critical points of $g_{\Omega_K}$ counting multiplicity. If $\mathrm{PW}(K)<\infty$ then $K$ is called a Parreau-Widom set and $\Omega_K$ is called a Parreau-Widom domain.

Let $W_n(\mu):=\frac{\|P_n(\cdot; \mu)\|_{L^2(\mathbb{C};\mu)}}{\mathrm{Cap(supp(\mu))}^n}$ where $\mathrm{supp}(\cdot)$ denotes the support of $\mu$. Since $\mathrm{Cap(supp(\mu_K))}=\mathrm{Cap}(K)$ (see Lemma 1.2.7 in \cite{Stahl}) we use these two expressions interchangeably.

For the single interval case we have an analogue of \eqref{loglu1}:

\begin{theorem}\label{Widdo}[Theorem 12.3, Theorem 9.2 \cite{widom2}]
	Let $K$ be a compact non-degenerate interval on $\mathbb{R}$ and  $d\mu(x)= f(x) dx$ be a unit Borel measure supported on $K$ such that 
	
	$$R(\infty):= \exp{\left(\int_K \log{f}(x)\,d\mu_K(x)\right)}>0.$$
	
	Then $\lim_{n\rightarrow\infty} (W_n(\mu))^2= 2\pi R(\infty) \mathrm{Cap}(K)$.

\end{theorem}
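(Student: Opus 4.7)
The plan is to reduce the problem to the classical Szeg\H{o} theorem on the unit circle via the Joukowski parameterization, which is the standard route for a single interval.

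First I would reduce to $K = [-1,1]$ by an affine change of variable $T(x) = \alpha x + \beta$ mapping $K$ onto $[-1,1]$. Under $T$, monic orthogonal polynomials scale by $\alpha^n$, the capacity scales as $\mathrm{Cap}(K) = \alpha/2$, the equilibrium measure $\mu_K$ pushes forward to the arcsine measure $dx/(\pi\sqrt{1-x^2})$, and $R(\infty)$ is invariant under the transformation. Both sides of the desired identity scale compatibly, so it suffices to treat $K = [-1,1]$ with $\mathrm{Cap}(K) = 1/2$.

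Next I would pull $\mu$ back to the unit circle via $x = \cos\theta$ by defining the symmetrized probability measure $d\tilde\mu(\theta) = \frac{1}{2}f(\cos\theta)|\sin\theta|\,d\theta$ on $\mathbb{T}$. Its Radon--Nikodym weight with respect to $d\theta/(2\pi)$ is $\tilde w(\theta) = \pi|\sin\theta|f(\cos\theta)$; using the classical identity $\int_\mathbb{T}\log|\sin\theta|\,\frac{d\theta}{2\pi} = -\log 2$ together with the change of variable $x=\cos\theta$, one computes $\int_\mathbb{T}\log\tilde w(\theta)\,\frac{d\theta}{2\pi} = \log(\pi/2) + \int_{-1}^1 \log f\,d\mu_K = \log(\pi R(\infty)/2)$. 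Hence the hypothesis $R(\infty)>0$ is precisely the Szeg\H{o} condition for $\tilde\mu$ on $\mathbb{T}$, and applying \eqref{loglu1} delivers $\lim_{m\to\infty}\|\Phi_m(\cdot;\tilde\mu)\|_{L^2(\tilde\mu)}^2 = \pi R(\infty)/2$, where $\Phi_m$ denotes the $m$-th monic OPUC.

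The core step is to relate these OPUC norms back to $W_n(\mu)$. Because $\tilde\mu$ is invariant under conjugation, the monic OPUC have real coefficients, and the classical Szeg\H{o} OPRL/OPUC mapping (see, e.g., Chapter~13 of \cite{Sim3}) expresses $P_n(\cos\theta;\mu)$ as an explicit self-reciprocal combination of $\Phi_{2n}(e^{i\theta};\tilde\mu)$ and $\Phi_{2n}^*(e^{i\theta};\tilde\mu)$. Squaring this identity, integrating against $\tilde\mu$, using $\|\Phi_{2n}^*\|_{L^2(\tilde\mu)} = \|\Phi_{2n}\|_{L^2(\tilde\mu)}$ by symmetry, and converting back to an integral over $[-1,1]$ via the Joukowski change of variable yields the asymptotic relation $2^{2n}\|P_n(\cdot;\mu)\|_{L^2(\mu)}^2 \sim 2\|\Phi_{2n}(\cdot;\tilde\mu)\|_{L^2(\tilde\mu)}^2$; the ratio is a rational function in the Verblunsky coefficient $\alpha_{2n-1}$ which tends to $1$ because $\sum|\alpha_m|^2<\infty$ under the Szeg\H{o} condition, so $\alpha_m\to 0$. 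Combined with the previous paragraph this gives $\lim W_n(\mu)^2 = \lim 2^{2n}\|P_n\|^2 = 2\cdot\pi R(\infty)/2 = \pi R(\infty) = 2\pi R(\infty)\mathrm{Cap}(K)$.

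The main obstacle is deriving the exact form of the Szeg\H{o} OPRL/OPUC correspondence and controlling the $\alpha_{2n-1}$-dependent error: the algebraic identity relating $\Phi_{2n}(z)+\Phi_{2n}^*(z)$ to $P_n((z+z^{-1})/2)$ must be established with the correct leading constant, and one must carefully track the $2^n$ factor from the Joukowski substitution together with the $|\sin\theta|$ Jacobian. An alternative that avoids the OPRL/OPUC dictionary entirely is Widom's original contour-integral approach in \cite{widom2}: represent $P_n(z;\mu)$ via an integral involving a Szeg\H{o} outer function on $\Omega_K$ whose boundary modulus recovers $f$, and extract asymptotics from the Joukowski conformal map directly. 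This latter approach is closer in spirit to the rest of the paper and is the one that generalizes to the Parreau--Widom setting.
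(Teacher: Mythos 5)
The paper does not prove this statement: Theorem \ref{Widdo} is quoted verbatim from Widom's paper (Theorems 9.2 and 12.3 of \cite{widom2}), so there is no internal proof to compare against. Your proposal is a correct and complete strategy for the single-interval case, and it is genuinely different from the cited source: Widom proves the result inside his general machinery for systems of curves and arcs (outer functions on $\Omega_K$, the function-theoretic extremal problem $\nu(f,\zeta)$), whereas you use the classical Szeg\H{o} mapping to the circle, which is shorter and self-contained for one interval but does not generalize to several arcs. Your constants check out: with $\tilde w(\theta)=\pi|\sin\theta|f(\cos\theta)$ one gets $\int\log\tilde w\,\tfrac{d\theta}{2\pi}=\log(\pi R(\infty)/2)$, the identity $P_n(x)=2^{-n}(1-\alpha_{2n-1})^{-1}z^{-n}\bigl(\Phi_{2n}(z)+\Phi_{2n}^*(z)\bigr)$ together with $\langle\Phi_{2n},\Phi_{2n}^*\rangle=-\alpha_{2n-1}\|\Phi_{2n}\|^2$ gives $4^n\|P_n\|^2_{L^2(\mu)}=2\|\Phi_{2n}\|^2_{L^2(\tilde\mu)}/(1-\alpha_{2n-1})$, and $\alpha_m\to0$ under the Szeg\H{o} condition, yielding the limit $\pi R(\infty)=2\pi R(\infty)\mathrm{Cap}([-1,1])$ via \eqref{loglu1}.

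One small correction in the reduction step: $R(\infty)$ is \emph{not} invariant under the affine change of variable. If $\mu$ is pushed forward from $K=[a,b]$ to $[-1,1]$, its density acquires the Jacobian factor $(b-a)/2$, so $R(\infty)$ is multiplied by $(b-a)/2$ while $\mathrm{Cap}(K)$ is divided by the same factor; it is the product $R(\infty)\,\mathrm{Cap}(K)$ that is affine-invariant, matching the invariance of $W_n(\mu)$. Your conclusion that both sides scale compatibly is therefore right, but the justification as written is off and should be restated this way, since the precise constant is the entire content of the theorem.
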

If $K$ is a union of finitely many intervals then behavior of $W_n(\mu)$ is more irregular. See \cite{Chris} for a generalization and a more complete version of Theorem \ref{Widdo} when $K$ is a union of finitely many intervals.

Let $K$ be regular Parreau-Widom subset of $\mathbb{R}$. The Lebesgue measure $dx_{\restriction K}$ restricted to $K$ and $d\mu_K$ are mutually absolutely continuous, see \cite{sodin}. In particular this implies that $K$ has positive Lebesgue measure. This notion includes union of finitely many intervals as well as some Cantor sets, see e.g. \cite{alpgon2, peher}. The Szeg\H{o} theorem on Parreau-Widom sets is as follows \cite{christiansen}:

\begin{theorem}\label{chrisszego}
	Let $K$ be regular Parreau-Widom subset of $\mathbb{R}$. Let $d\mu= f(x) dx+ d\mu_s$ where $d\mu_s$ is the singular part with respect to the Lebesgue measure and suppose that the support of $\mu$, except possibly the isolated point masses, is equal to $K$. Denote the set of isolated points of the support by $\{x_k\}$. On condition that $$\sum_k {g_{\Omega_K}(x_k)}<\infty$$
	we have 
	\begin{equation}\label{chrsze}
	\limsup_{n\rightarrow\infty} W_n(\mu)>0 \iff \int_K \log{f}(x)\, d \mu_K(x)>-\infty.
	\end{equation}
	If one of the conditions in \eqref{chrsze} holds then 
	\begin{equation*}
	0< \liminf_{n\rightarrow\infty} W_n(\mu)\leq \limsup_{n\rightarrow\infty} W_n(\mu)<\infty.
	\end{equation*}

\end{theorem}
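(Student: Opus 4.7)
The strategy is to transfer the problem to the unit disk via the universal covering map and then invoke classical Szeg\H{o} theory in character-automorphic form. Since $K$ is regular Parreau-Widom, fix a universal cover $\pi:\mathbb{D}\to\Omega_K$ with $\pi(0)=\infty$ and let $\Gamma$ be the Fuchsian deck-transformation group, so that $\Gamma$-invariant (or, more generally, character-automorphic) analytic functions on $\mathbb{D}$ correspond to single-valued (resp.\ multi-valued modulus-single-valued) analytic functions on $\Omega_K$. The Parreau-Widom hypothesis $\sum g_{\Omega_K}(c_n)<\infty$ is equivalent to a Blaschke condition on the $\pi$-preimages of the critical points of $g_{\Omega_K}$; it guarantees that the character-automorphic Hardy space $H^2(\mathbb{D},\alpha)$ is non-trivial for every character $\alpha$ of $\Gamma$ and that the Widom extremal problem has a non-degenerate solution.

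I would first prove the lower bound $\liminf W_n(\mu)>0$ under the Szeg\H{o} hypothesis by a direct Jensen argument, bypassing the covering map. Writing $d\mu_K=w_K\,dx$ on $K$ with $w_K>0$ a.e.\ (Sodin \cite{sodin}), one has
\[
W_n(\mu)^2 \;\geq\; \int_K\!\Big|\tfrac{P_n}{\mathrm{Cap}(K)^n}\Big|^{2}\tfrac{f}{w_K}\,d\mu_K
\;\geq\; \exp\!\left(2\!\int\!\log\tfrac{|P_n|}{\mathrm{Cap}(K)^n}\,d\mu_K+\!\int\!\log f\,d\mu_K-\!\int\!\log w_K\,d\mu_K\right)
\]
by applying Jensen's inequality to the probability measure $\mu_K$. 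The potential-theoretic bound $\int\log|P_n/\mathrm{Cap}(K)^n|\,d\mu_K\geq 0$ (from regularity of $K$ together with zeros of $P_n$ lying in the convex hull of $\mathrm{supp}(\mu)\subseteq\mathbb{R}$) leaves a strictly positive lower bound, once one knows $\int\log w_K\,d\mu_K$ is finite — a standard consequence of the Parreau-Widom assumption.

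For the matching upper bound $\limsup W_n(\mu)<\infty$ and the implication ``$\limsup W_n(\mu)>0$ only if Szeg\H{o}'' in \eqref{chrsze}, I would construct trial polynomials on $\Omega_K$ via the covering map. Under the Szeg\H{o} hypothesis, build an outer character-automorphic Szeg\H{o} function $D$ on $\mathbb{D}$ with $|D|^2=f\circ\pi$ on $\pi^{-1}(K)\subseteq\mathbb{T}$, and under $\sum_k g_{\Omega_K}(x_k)<\infty$, a character-automorphic Blaschke product $B_\mu$ absorbing the isolated mass points. Combined with a degree-$n$ approximation of the Widom extremal function in $H^2(\mathbb{D},\alpha_n)$ for the appropriate character $\alpha_n$, their product descends to a monic polynomial $Q_n\in\mathcal{P}_n$ whose $L^2(\mu)$-norm is asymptotically $\mathrm{Cap}(K)^n$ times a Parseval-type quantity. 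If instead $\int\log f\,d\mu_K=-\infty$, the same construction (now with $D$ replaced by a truncation built from $\max(f,\epsilon)$ and then $\epsilon\downarrow 0$) forces $\|Q_n\|_{L^2(\mu)}/\mathrm{Cap}(K)^n\to 0$, hence $W_n(\mu)\to 0$.

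The principal obstacle is the construction and character-matching of these three ingredients: the Szeg\H{o} function $D$, the Blaschke products coming from critical points and from $\{x_k\}$, and the Widom extremal kernel all carry \emph{a priori} different characters of $\Gamma$, and one must arrange that their product is genuinely $\Gamma$-invariant so that it descends to a single-valued polynomial on $\Omega_K$. This character bookkeeping is the content of Widom's theory of Hardy spaces on Parreau-Widom domains, and is exactly where the hypothesis $\mathrm{PW}(K)<\infty$ enters essentially; without it, $H^2(\mathbb{D},\alpha)$ would be trivial for generic characters and the trial-polynomial construction would collapse.
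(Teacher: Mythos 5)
First, a point of reference: the paper does not prove this statement. Theorem \ref{chrisszego} is imported verbatim from Christiansen \cite{christiansen}, so there is no internal proof to compare yours against; it has to be judged on its own. Your lower-bound half is sound and is, in essence, the same argument the paper uses for its own Theorem \ref{maintheo}: since $dx_{\restriction K}$ and $d\mu_K$ are mutually absolutely continuous on a regular Parreau--Widom set, $d\mu/d\mu_K=f/w_K$ $\mu_K$-a.e., and Jensen's inequality for the probability measure $\mu_K$ together with Frostman's bound $\int\log|z-\tau|\,d\mu_K(z)\ge\log\mathrm{Cap}(K)$ gives $W_n(\mu)^2\ge\exp\bigl(\int\log f\,d\mu_K-\int\log w_K\,d\mu_K\bigr)$ uniformly in $n$. (You do not need regularity or reality of the zeros for the Frostman step.) One caveat: this bound is strictly positive only if $\int\log w_K\,d\mu_K<+\infty$, whereas the fact quoted in the paper from Section 4 of \cite{christiansen} is the opposite inequality $\int\log w_K\,d\mu_K>-\infty$; the upper finiteness does hold on Parreau--Widom sets, but you should identify its source rather than label it ``standard'' --- it is not the inequality everyone cites.

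The genuine gap is the entire second half of the theorem. The implication $\limsup_n W_n(\mu)>0\Rightarrow\int\log f\,d\mu_K>-\infty$, the bound $\limsup_n W_n(\mu)<\infty$, and the role of the hypothesis $\sum_k g_{\Omega_K}(x_k)<\infty$ are all left at the level of a description of what one would construct. You correctly identify the architecture (universal cover, character-automorphic Szeg\H{o} function, Blaschke products for the critical points and the mass points, Widom's extremal problem) --- this is indeed how \cite{christiansen} proceeds --- but you then name the character bookkeeping as ``the principal obstacle'' without resolving it. That bookkeeping, together with the solvability and continuity in the character of the Widom extremal problem and the passage from character-automorphic $H^2$ elements back to genuine monic polynomials of degree $n$ with controlled error, is the actual mathematical content of the hard direction and occupies most of Christiansen's paper. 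As written, your proposal proves only that the Szeg\H{o} condition implies $\inf_n W_n(\mu)>0$ (one implication of \eqref{chrsze} plus the $\liminf$ statement); the converse implication and the finiteness of the $\limsup$ are asserted, not established.
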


	The following definition is suggested in \cite{christi}: Let $K$ be a regular Parreau-Widom subset of $\mathbb{R}$. The Szeg\H{o} class of measures on $K$ is the set of all unit Borel measures 
$d\mu= f(x)dx+ d\mu_s$ such that
\begin{enumerate}[(i)]
	\item the essential support is equal to $K$.
	\item $\int_K \log{f(x)}\,d\mu_K(x)>-\infty.$ (Szeg\H{o} condition)
	\item the isolated points $\{x_n\}$ of $\mathrm{supp}(\mu)$
	satisfy $\sum_n g_{\Omega_K}(x_n)<\infty.$
\end{enumerate}
By Thoerem \ref{chrisszego}, (ii) can be replaced by one of the following conditions:
\begin{enumerate}[(ii$^\prime$)]
	\item $\limsup_{n\rightarrow\infty}W_n(\mu)>0.$ (Widom condition 1)
\end{enumerate}
\begin{enumerate}[(ii$^{\prime\prime}$)]
	\item $\liminf_{n\rightarrow\infty}W_n(\mu)>0.$ (Widom condition 2)
\end{enumerate}

In \cite{kruger}, Kr{\"u}ger and Simon discuss what Szeg\H{o} class of measures may mean if the prescribed set is of zero Lebesgue measure.  Generalizing the Szeg\H{o} class can help to generalize the notion of isospectral torus of Jacobi operators. We refer the reader to \cite{christi} for an exposition of the link between these two concepts on certain Parreau-Widom sets. 

A replacement of the condition (ii) is suggested in \cite{alpgon} for a regular compact subset $K$ of $\mathbb{R}$:
\begin{enumerate}[(ii$^*$)]
\item $\int_K \log{\frac{d\mu}{d\mu_K}(x)}\,d\mu_K(x)>-\infty.$
\end{enumerate}

This condition is equivalent to the original Szeg\H{o} condition on Parreau-Widom  subsets of $\mathbb{R}$.  Since $dx_{\restriction K}$  and $d\mu_K$ are mutually absolutely continuous,

$$\frac{d\mu}{dx} = \frac{d\mu}{d\mu_K} \frac{d\mu_K}{dx}$$

on $K$. Thus, in view of the inequality (see Section 4  in \cite{christiansen})
$$\int_K \log{\frac{d\mu_K}{d x}}(x)\,d\mu_K(x)>-\infty,$$
	
 (ii$^*$) and the conditions (i), (iii) can replace (i), (ii), (iii) without changing the definition.

  All of the previous generalizations (\cite{widom2}, \cite{peher}, \cite{Chris}, \cite{christiansen}) of the Szeg\H{o} theorem involve a Parreau-Widom set. The main reason of defining (ii$^*$) is that this condition can be used on arbitrary non-polar  compact subsets of $\mathbb{C}$ as opposed to $(ii)$. Note that, if the set has zero Lebesgue measure then the condition (ii) cannot be used to distinguish measures but the Widom conditions are still meaningful. Our main result shows that $(ii^*)$ is the natural generalization of the Szeg\H{o} condition $(ii)$ because if we assume $(ii^*)$ then the $W_n$'s have a lower bound in terms of a Szeg\H{o} integral:

\begin{theorem} \label{maintheo}
	Let $K$ be a non-polar compact subset of $\mathbb{C}$ and let $\mu$ be a unit Borel measure supported on a compact set containing $\partial\Omega_K$. Let $\mu_s$ denote the singular part of $\mu$ with respect to $\mu_K$ and $h$ be a non-negative measurable function on $\partial \Omega_K$ such that 
	\begin{itemize}
		\item $d\mu= h \,d{\mu}_K+ d\mu_s $.
		\item $M:=\int \log{h}\, d\mu_K >-\infty.$
		\item $\mathrm{Cap(supp(\mu))}= \mathrm{Cap(supp(\mu_K))}$.
		
	\end{itemize}
	
	Then $\inf_{n\in\mathbb{N}} (W_n(\mu))^2 \geq e^M.$
\end{theorem}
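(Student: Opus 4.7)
The plan is to prove the sharper statement that, for \emph{every} monic polynomial $Q$ of degree $n$,
\[
\int |Q(z)|^2\,h(z)\,d\mu_K(z) \;\geq\; \mathrm{Cap}(K)^{2n}\,e^M,
\]
and deduce the theorem by setting $Q=P_n(\cdot;\mu)$ and discarding the non-negative singular part: $\|P_n(\cdot;\mu)\|_{L^2(\mathbb{C};\mu)}^2 \geq \int |P_n|^2\,h\,d\mu_K$. The third hypothesis $\mathrm{Cap}(\mathrm{supp}\,\mu)=\mathrm{Cap}(\mathrm{supp}\,\mu_K)$ is used only to identify the normalizing constant in $W_n(\mu)$ with $\mathrm{Cap}(K)$.

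The argument combines Jensen's inequality with a classical potential-theoretic lower bound on $\int \log|Q|\,d\mu_K$. Since $\mu_K$ is a probability measure and $\log$ is concave,
\[
\log\int|Q|^2\,h\,d\mu_K \;\geq\; \int\log(|Q|^2 h)\,d\mu_K \;=\; 2\int\log|Q|\,d\mu_K \,+\, M,
\]
where the Szeg\H{o} hypothesis $M>-\infty$ controls the $\log h$ contribution. Next, factoring $Q(z)=\prod_{j=1}^n(z-\zeta_j)$ and applying Tonelli yields
\[
\int\log|Q(z)|\,d\mu_K(z) \;=\; -\sum_{j=1}^n U^{\mu_K}(\zeta_j),
\]
where $U^{\mu_K}(w):=\int\log\frac{1}{|z-w|}\,d\mu_K(z)$. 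The Frostman identity $U^{\mu_K}(z)+g_{\Omega_K}(z)=-\log\mathrm{Cap}(K)$ on $\Omega_K$, combined with $g_{\Omega_K}\geq 0$, produces the pointwise bound $U^{\mu_K}(\zeta)\leq -\log\mathrm{Cap}(K)$, hence $\int\log|Q|\,d\mu_K \geq n\log\mathrm{Cap}(K)$; chaining with Jensen gives the desired inequality.

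The main technical obstacle is that the Frostman bound $U^{\mu_K}\leq -\log\mathrm{Cap}(K)$ is known only quasi-everywhere in $\mathbb{C}$: on a polar exceptional subset of $K$, $U^{\mu_K}$ may equal $+\infty$, and a root $\zeta_j$ landing there would make the Jensen step vacuous. I would handle this by a small perturbation, replacing the offending $\zeta_j$ by $\zeta_j+\varepsilon$ for an $\varepsilon$ in the dense complement of the exceptional polar set. The inequality then holds for the perturbed polynomial $Q_\varepsilon$, and since $\int|Q_\varepsilon|^2\,h\,d\mu_K$ depends continuously on $\varepsilon$, letting $\varepsilon\to 0$ recovers the bound for $Q$, and hence for $P_n(\cdot;\mu)$.
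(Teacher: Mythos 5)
Your argument is correct and is essentially the paper's own proof: drop the singular part, apply Jensen's inequality to $\log\int|P_n|^2h\,d\mu_K$, and bound $\int\log|z-\zeta_j|\,d\mu_K(z)$ from below by $\log\mathrm{Cap}(K)$ via Frostman's theorem. Your final paragraph, however, addresses an obstacle that does not exist: Frostman's theorem (Theorem 3.3.4(a) in Ransford) gives $U^{\mu_K}(\zeta)\leq-\log\mathrm{Cap}(K)$ for \emph{every} $\zeta\in\mathbb{C}$ --- the polar exceptional set enters only in the reverse direction, where the equality $U^{\mu_K}=-\log\mathrm{Cap}(K)$ on $K$ can fail; in particular $U^{\mu_K}$ is never $+\infty$, so the perturbation $\zeta_j\mapsto\zeta_j+\varepsilon$ and the limiting argument can simply be deleted.
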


	Kr{\"u}ger and Simon \cite{kruger} study the orthogonal polynomials for the Cantor measure $\nu_{K_0}$. It is the Hausdorff measure for $h(t)=t^{\frac{\log{2}}{\log{3}}}$ restricted to the Cantor ternary set $K_0$. They pose the following conjecture in view of the numerical results:
	$$\liminf_{n\rightarrow\infty} W_n(\nu_{K_0})>0.$$
	If this is true then $\nu_{K_0}$ satisfies $(i),(ii^{''}),(iii)$ on $K_0$. But $\nu_{K_0}$ and $\mu_{K_0}$ are mutually singular by \cite{maka}. Thus $\nu_{K_0}$ does not satisfy the Szeg\H{o} condition $(ii^*)$. Cantor ternary set is regular, see \cite{biavol}. This means that if the conjecture above is true then the Widom condition (ii$^{\prime\prime}$)  does not imply the Szeg\H{o} condition $(ii^*)$ unlike the Parreau-Widom case. 

For a given compact infinite set $K$ in $\mathbb{C}$, the polynomial $T_{n,K}(z)=z^n+\cdots$ satisfying $$\|T_{n,K}\|_K=\min\{\|Q_n\|_{K}: \mbox{$Q_n$ monic polynomial of degree $n$}\}$$
is called the $n$-th Chebyshev polynomial for $K$ where $\|\cdot\|_{K}$ is the sup norm on $K$.

For a non-polar compact set $K\subset\mathbb{C}$, let $$M_{n,K}:= \|T_{n,K}\|_{K}/\mathrm{Cap}(K)^n.$$ For a review of the recent results for these ratios we refer the reader to \cite{Csz1} and many basic results regarding the asymptotics of $L^2$ and $L^\infty$ extremal polynomials can be found in \cite{simon1}.

The following result is a generalization of Theorem 3 in \cite{alpeq}:

\begin{corollary}\label{equi}
	Let $K$ be a non-polar compact subset of $\mathbb{C}$. Then $\inf_{n\in\mathbb{N}} W_n(\mu_K)\geq 1$. The inequality is sharp: If $K=  \mathbb{T}$ then $d\mu_K = d\theta / (2\pi)$ and $W_n (\mu_{ \mathbb{T}}) = 1 $ for all $n\in\mathbb{N}$.
\end{corollary}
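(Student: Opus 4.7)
The plan is to deduce Corollary \ref{equi} directly from Theorem \ref{maintheo} by specializing to $\mu=\mu_K$. I will verify the three bulleted hypotheses of the main theorem in this special case. Since $\mathrm{supp}(\mu_K)\subseteq \partial\Omega_K$ by standard potential theory, $\mu_K$ is supported on the compact set $\partial\Omega_K$, which trivially contains $\partial\Omega_K$. The Radon--Nikodym decomposition of $\mu_K$ with respect to itself is trivial: take $h\equiv 1$ on $\partial\Omega_K$ and $\mu_s=0$, so that $M=\int \log 1 \, d\mu_K = 0 > -\infty$. The capacity condition $\mathrm{Cap}(\mathrm{supp}(\mu))=\mathrm{Cap}(\mathrm{supp}(\mu_K))$ is tautological in this case. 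Theorem \ref{maintheo} then yields $\inf_{n} (W_n(\mu_K))^2 \geq e^M = 1$, and taking square roots delivers the claim.

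For the sharpness assertion, I will compute $W_n(\mu_{\mathbb{T}})$ directly. The equilibrium measure of $\mathbb{T}$ is $d\mu_{\mathbb{T}}=d\theta/(2\pi)$, and $\mathrm{Cap}(\mathbb{T})=1$. Since $\{z^n\}_{n\geq 0}$ is orthonormal in $L^2(\mathbb{T}; d\theta/(2\pi))$, we have $P_n(z;\mu_{\mathbb{T}})=z^n$ with $\|z^n\|_{L^2(\mathbb{T};d\theta/(2\pi))}=1$, and consequently $W_n(\mu_{\mathbb{T}})=1$ for every $n$.

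There is essentially no obstacle here: the result is labelled a corollary precisely because its proof reduces to plugging $\mu=\mu_K$ into the main theorem. The only mild point of care is reading the clause ``supported on a compact set containing $\partial\Omega_K$'' as referring to the ambient compact set on which $\mu$ lives, which is immediately satisfied by taking the ambient set to be $\partial\Omega_K$ itself.
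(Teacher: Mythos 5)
Your proposal is correct and matches the paper's own proof: both parts are obtained exactly as you describe, by taking $h\equiv 1$ and $\mu_s=0$ in Theorem \ref{maintheo} and then computing $P_n(z;\mu_{\mathbb{T}})=z^n$ with $\mathrm{Cap}(\mathbb{T})=1$ directly. Your extra verification of the support and capacity hypotheses is a harmless elaboration of the same argument.
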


\emph{Remark. } 
We would like to draw the reader's attention to the similarity between the general results regarding the lower bounds of $W_n(\mu_K)$ and $M_{n,K}$: It is well known that $M_{n,K}\geq 1$ , see Theorem 5.5.4 in \cite{Ransford} and the equality  is obtained for all $n$ on the unit circle.

It seems that the relation between the Szeg\H{o} condition and boundedness of $W_n(\mu)$ from above is more complicated. There are examples of Cantor sets $K(\gamma)$ such that \\ $W_n(\mu_{K(\gamma)})\rightarrow\infty$ as $n\rightarrow\infty$, see Example 5.3, \cite{alpgon}. We emphasize that $K(\gamma)$ does not satisfy the condition regarding the density of characters given in Theorem \ref{ratfreq}, see Section 4 in \cite{g1}. This condition is introduced recently and one can construct Parreau-Widom sets satisfying this density condition, see \cite{Chszy}. It is an open problem whether we can omit this property in Theorem \ref{ratfreq}. The next result implies Theorem 1.4 in \cite{Chszy} in view of \eqref{chebbb} and the proof is very similar.

\begin{theorem} \label{ratfreq}
	Let $K$ be a regular compact subset of $\mathbb{C}$. Suppose that $\{\chi_K^n\}_{n=-\infty}^\infty$ is dense in $\Pi(\Omega_K)^*$.
	If $(W_n(\mu_K))_{n=1}^\infty$ is bounded then $K$ is a Parreau-Widom set.
\end{theorem}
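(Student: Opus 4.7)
The plan is to mimic the proof of Theorem 1.4 in \cite{Chszy} in the $L^2$ setting, replacing the Chebyshev polynomial $T_{n,K}$ throughout by the monic orthogonal polynomial $P_n(\cdot;\mu_K)$ and the sup-norm on $K$ by the $L^2(\mu_K)$-norm. Let
$$b(z) := \exp\!\bigl(-g_{\Omega_K}(z)-i\,\widetilde{g}_{\Omega_K}(z)\bigr)$$
be the (multi-valued) complex Green function of $\Omega_K$, so that $|b|=1$ on $\partial\Omega_K$, $b(\infty)=0$, and $b$ is character-automorphic with character $\chi_K$. Set
$$\phi_n(z) := \frac{P_n(z;\mu_K)\,b(z)^n}{\mathrm{Cap}(K)^n}.$$
The expansion $g_{\Omega_K}(z)=\log|z|-\log\mathrm{Cap}(K)+o(1)$ at $\infty$ yields $|\phi_n(\infty)|=1$. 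Since $P_n$ is single-valued and $b^n$ carries the character $\chi_K^n$, the function $\phi_n$ is character-automorphic with character $\chi_K^n$; and $|b|=1$ on $\partial\Omega_K$ gives
$$\int_{\partial\Omega_K}|\phi_n|^2\,d\mu_K = W_n(\mu_K)^2 \le C^2,$$
where $C$ is the assumed upper bound of $(W_n(\mu_K))$.

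I next pull back under the universal cover $\pi:\mathbb{D}\to\Omega_K$ with $\pi(0)=\infty$. Since $\mu_K$ is the harmonic measure of $\Omega_K$ at $\infty$ and pulls back to normalized Lebesgue measure on $\partial\mathbb{D}$, the lifts $\widetilde{\phi}_n:=\phi_n\circ\pi$ are single-valued elements of $H^2(\mathbb{D})$ with $\|\widetilde{\phi}_n\|_{H^2}\le C$ and $|\widetilde{\phi}_n(0)|=1$; a unimodular rescaling (preserving the character) arranges $\widetilde{\phi}_n(0)=1$. Given $\alpha\in\Pi(\Omega_K)^*$, the density of $\{\chi_K^n\}_{n=-\infty}^{\infty}$ (together with the fact that closed subsemigroups of a compact Hausdorff group are subgroups) yields integers $n_k\uparrow\infty$ with $\chi_K^{n_k}(\gamma)\to\alpha(\gamma)$ for every Deck transformation $\gamma$. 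By weak compactness of the ball of radius $C$ in $H^2(\mathbb{D})$, after passing to a subsequence $\widetilde{\phi}_{n_k}\rightharpoonup\psi$. Continuity of point evaluation forces $\psi(0)=1$, so $\psi\not\equiv 0$; and for each Deck transformation $\gamma$ and each $z\in\mathbb{D}$,
$$\psi(\gamma z)=\lim_k \widetilde{\phi}_{n_k}(\gamma z)=\lim_k \chi_K^{n_k}(\gamma)\widetilde{\phi}_{n_k}(z)=\alpha(\gamma)\psi(z),$$
so $\psi$ is a non-zero character-automorphic $H^2$ function with character $\alpha$.

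Having produced a non-zero element of $H^2(\mathbb{D})$ with each prescribed character, I would invoke a Widom-type criterion to conclude $\mathrm{PW}(K)<\infty$. This is the principal obstacle. In \cite{Chszy} the Chebyshev analogue uses the Bernstein-Walsh inequality $|T_n(z)|\le\|T_n\|_K\exp(ng_{\Omega_K}(z))$ to produce uniformly bounded character-automorphic $H^\infty$ functions and so appeals directly to Widom's classical $H^\infty$ criterion. Our $L^2$ hypothesis provides only $H^2$ control, so one must either invoke the $H^2$ form of Widom's theorem or factor the weak limit $\psi$ as inner times outer on $\mathbb{D}$ and show that the inner factor carries the character $\alpha$, thereby reducing back to the $H^\infty$ criterion. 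Making this passage from weak $H^2$ limits to bounded character-automorphic functions rigorous---and checking the behaviour of inner-outer factorisation under the Deck group action---is the technical heart of the argument, and as the excerpt remarks the structure is ``very similar'' to the proof of Theorem 1.4 in \cite{Chszy}.
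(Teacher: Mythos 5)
Your proposal is sound and reaches the same destination as the paper, but the middle of the argument is executed differently. Both proofs start from the same object $P_n(\cdot;\mu_K)B_{\Omega_K}^n/\mathrm{Cap}(K)^n$, normalize so that its modulus at $\infty$ is $1$, use the density of $\{\chi_K^n\}$ to approximate an arbitrary character $\alpha$, and end by exhibiting a non-zero element of $\mathcal{H}_2(\Omega_K,\alpha)$. Where you diverge is the compactness step: you lift to the universal cover and extract a weak limit in the ball of $H^2(\mathbb{D})$, using continuity of point evaluations to preserve non-vanishing at the origin and to pass the automorphy relation to the limit; the paper stays on $\Omega_K$, represents each $F_j$ as an analytic section over a countable cover, uses Harnack plus the bound $\mathrm{LHM}(|F_j|^2)(\infty)=W_{n_j}(\mu_K)^2\le M^2$ to get local uniform boundedness, applies Montel and a diagonal argument, and then constructs a harmonic majorant of $|f|^2$ by hand via a $C^{2+}$ exhaustion. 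Your route is shorter but quietly relies on the standard dictionary between $\mathcal{H}_2(\Omega_K,\zeta)$ and character-automorphic $H^2(\mathbb{D})$ functions (including the fact that the least harmonic majorant of $|\phi_n|^2$ at $\infty$ equals $\int|\phi_n|^2\,d\mu_K$, which is where regularity of $K$ enters); the paper's route avoids the covering map at the cost of the more laborious majorant construction. Finally, the step you flag as ``the principal obstacle'' is not one: the criterion the paper invokes (Theorem~\ref{widddd}, i.e.\ Theorem~1 of \cite{Widomhp}) is already stated in the $\mathcal{H}_2$ form --- $\Omega_K$ is Parreau--Widom if and only if $\mathcal{H}_2(\Omega_K,\zeta)\neq\{0\}$ for every $\zeta$ --- so your non-zero weak limit with character $\alpha$ finishes the proof directly, with no need for inner--outer factorisation or a reduction to the $H^\infty$ criterion.
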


 As a corollary of Theorem \ref{ratfreq} we obtain the following result which complements Corollary \ref{equi} but the scope of Corollary \ref{ratfreq2} is much more limited. The proof of one of the implications is quite trivial and the inverse implication  follows from  Theorem 1.4 in \cite{Csz1} and Theorem \ref{ratfreq}. 

\begin{corollary}\label{ratfreq2}

		Let $K$ be a regular compact subset of $\mathbb{R}$ and $\{\chi_K^n\}_{n=-\infty}^\infty$ be dense in $\Pi(\Omega_K)^*$.
		Then $(W_n(\mu_K))_{n=1}^\infty$ is bounded if and only if $(M_{n,K})_{n=1}^\infty$ is bounded.
\end{corollary}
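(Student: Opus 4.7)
The plan is to handle the two implications separately. The forward direction (boundedness of $(M_{n,K})$ implies boundedness of $(W_n(\mu_K))$) should be a short, completely general inequality chain, and the reverse direction should be a direct citation of Theorem \ref{ratfreq} followed by Theorem 1.4 of \cite{Csz1}, with the hypothesis $K\subset\mathbb{R}$ only being used in the second step.

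For the easy direction, I would first use the $L^2$-extremality of $P_n(\cdot;\mu_K)$ from \eqref{infli}, which gives
\[
\|P_n(\cdot;\mu_K)\|_{L^2(\mathbb{C};\mu_K)} \;\leq\; \|T_{n,K}\|_{L^2(\mathbb{C};\mu_K)}
\]
since $T_{n,K}$ is a competitor monic polynomial of degree $n$. Next, because $\mu_K$ is a probability measure supported on $K$,
\[
\|T_{n,K}\|_{L^2(\mathbb{C};\mu_K)}^2 \;=\; \int_K |T_{n,K}|^2\,d\mu_K \;\leq\; \|T_{n,K}\|_K^2.
\]
Dividing both sides by $\mathrm{Cap}(K)^{2n}$ gives $W_n(\mu_K)\leq M_{n,K}$ for every $n$. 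This is presumably what equation \eqref{chebbb} records, and boundedness of $(M_{n,K})$ immediately transfers to boundedness of $(W_n(\mu_K))$.

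For the converse, suppose $(W_n(\mu_K))_{n=1}^\infty$ is bounded. The hypotheses of Theorem \ref{ratfreq} are in place ($K$ regular, $\{\chi_K^n\}$ dense in $\Pi(\Omega_K)^*$), so that theorem lets us conclude $K$ is a Parreau-Widom set. Since $K\subset\mathbb{R}$ and $K$ is a regular Parreau-Widom subset of $\mathbb{R}$, Theorem 1.4 of \cite{Csz1} applies and yields that $(M_{n,K})_{n=1}^\infty$ is bounded. Combining the two implications gives the equivalence.

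The proof is really just a bookkeeping exercise once Theorem \ref{ratfreq} is in hand; the only point that requires any care is verifying that the hypotheses of \cite[Theorem 1.4]{Csz1} are precisely met (i.e., that regularity plus the Parreau-Widom property deduced from Theorem \ref{ratfreq} is what that theorem needs). I do not foresee any obstacle beyond that, which is why the paper flags the scope of the corollary as more limited than Corollary \ref{equi}: both the hypothesis $K\subset\mathbb{R}$ and the character-density assumption are consumed, one in each direction.
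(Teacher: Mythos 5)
Your proposal is correct and follows the paper's proof essentially verbatim: the forward direction is the chain $\|P_n(\cdot;\mu_K)\|_{L^2(\mathbb{C};\mu_K)}\leq\|T_{n,K}\|_{L^2(\mathbb{C};\mu_K)}\leq\|T_{n,K}\|_K$ (which is exactly what \eqref{chebbb} records), and the reverse direction is Theorem \ref{ratfreq} followed by Theorem 1.4 of \cite{Csz1}. No gaps.
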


The plan of the paper is as follows. In section 2, we discuss preliminary results. In Sections 3, we prove the theorems.  In Appendix we prove a special case of Theorem \ref{maintheo} with a completely different method.

\section{preliminaries}

Let $K$ be a non-polar compact subset of $\mathbb{C}$. We denote the harmonic measure for $\Omega_K$ at $z$ by $w_{\Omega_K}(z;\cdot)$. The harmonic measure $w_{\Omega_K}(\infty;\cdot)$ at infinity is $\mu_K$, see Theorem 4.3.14, \cite{Ransford}. If $f$ is a Borel measurable function on $\partial\Omega _K$ such that
\begin{equation}\label{diri1}
H_{\Omega_K}(z;f):= \int_{\partial \Omega_K} f \,dw_{\Omega_K}(z;\cdot)
\end{equation}
is integrable for some $z\in \Omega_K$ then the integral in \eqref{diri1} is finite for all $z\in\Omega_K$, see Appendix A.3 in \cite{saff}. In this case $H_{\Omega_K}(z;f)$ is a harmonic function on $\Omega_K$ and it is called the solution of the Dirichlet problem corresponding to $f$ and $\Omega_K$. If additionally, $f$ is continuous and real valued on $\partial \Omega_K$ and $K$ is regular with respect to the Dirichlet problem then $\lim_{z\rightarrow\zeta} H_{\Omega_K}(z;f)= f(\zeta)$ for all $\zeta\in \partial \Omega_K$, see Corollary 4.1.8 in \cite{Ransford}. Hence if $f$ is continuous and $K$ is regular then $H_{\Omega_K}(\cdot;f)$ can be extended continuously to $\overbar{\Omega_K}$. We denote the extension by $H_{\overline{\Omega_K}}(z;f)$.

In \cite{widom2} asymptotics of orthogonal polynomials are given in terms of multiplicative analytic functions. Following \cite{Hasumi} (see p. 23-31), we say that $F$ is a multiplicative analytic (resp. meromorphic) function on $\Omega_K$ if $F$ is a multivalued analytic function on $\Omega_K$ with single valued absolute value $|F(z)|$. Each multiplicative analytic function determines a unique character: Let us fix a base point  $\mathcal{O}\in\Omega_K$. Let $F_\mathcal{O}$ be a single valued branch of $F$ at $\mathcal{O}$ and $c$ be a closed curve in $\Omega_K$ issuing from $\mathcal{O}$. Then $F_\mathcal{O}$ can be analytically continued along $c$ and the resulting function element at $\mathcal{O}$ is equal to $\zeta_F(c) F_\mathcal{O}$ where $|\zeta_F(c)|=1$. Note that the value of $\zeta_F$ is the same for homotopic curves and it is independent of the base point. Besides, if $c_1$ and $c_2$ are two closed curves issuing from $\mathcal{O}$ then $\zeta_F(c_1 c_2) F_\mathcal{O}= \zeta_F(c_1) \zeta_F(c_2) F_\mathcal{O}$. Thus, $\zeta_F(\cdot)$ is a character of the fundamental group $\Pi (\Omega_K)$. We denote the character group by $\Pi (\Omega_K)^\star$.

Multiplicative analytic functions can be defined in terms of analytic sections of line bundles. The following definitions about analytic sections are represented for the convenience of the reader. See Ch. 2 in \cite{Hasumi} and Section 1 of \cite{Widomhp} for more details.

Let $K$ be a regular compact subset of $\mathbb{C}$ and let $\mathcal{V}= \{V_i: i\in I\}$ be an open covering of $\Omega_K$. Then  $(\{\zeta_{ij}\}, \{V_i\})_{{i,j\in I}}$  is called a 1-cocycle over $\mathcal{V}$ if  $\zeta_{ij}\in \mathbb{T}$ and if 
$$ \zeta_{ij} \zeta_{jk}=\zeta_{ik}$$
whenever $V_i\cap V_j \cap V_k\neq\emptyset$. 

Let $\mathcal{V}_1= \{V_{i_1}\}$ and $\mathcal{V}_2= \{V_{i_2}\}$ be two open coverings of $\Omega_K$. Then two 1-cocycles \\
$(\{\zeta_{i_1j_1}\}, \{V_{i_1}\})$ and $(\{\zeta_{i_2 j_2}\}, \{V_{i_2}\})$ are equivalent if there is a refinement $\mathcal{U}$ of $\mathcal{V}_1$ and $\mathcal{V}_2$ (for each $U_i\in\mathcal{U}$ there are $V_{i_1}\in\mathcal{V}_1$ and $V_{i_2}\in\mathcal{V}_2$ containing $U_i$) and if there are numbers $\delta_i, \delta_j\in \mathbb{T}$ such that

$$\zeta_{i_2 j_2}= \delta_{i} \zeta_{i_1j_1} \delta_j^{-1}$$

whenever $U_{i}\cap U_{j} \neq \emptyset$.

The first cohomology group $H^1(\Omega_K; \mathbb{T})$ consists of these equivalence classes. Every element in $H^1(\Omega_K; \mathbb{T})$ determines a line bundle. The character group $\Pi(\Omega_K)^\star$ and $H^1(\Omega_K; \mathbb{T})$ are canonically isomorphic, see Theorem 1B, Ch2 \cite{Hasumi}. 

An analytic section $f$ of a line bundle $\zeta$ with representative $(\{\zeta_{ij}\}, \{V_i\})_{{i,j\in I}}$ is an equivalence class with a representative $(\{f_i\}, \{V_i\})_{{i\in I}}$ where each $f_i$ is an analytic function on $V_i$ such that
$$f_i(z)= \zeta_{ij} f_j(z)$$
in $V_i\cap V_j$. Let $f_{i_\mathcal{O}}$ be a branch of $f$ at $\mathcal{O}$. Then the analytic continuation of $f_{i_\mathcal{O}}$ along each closed curve issuing from $\mathcal{O}$ is possible, see Theorem 2B, Ch 2, \cite{Hasumi}. One can construct the corresponding multiplicative analytic function using this. Conversely, given a multiplicative function, we can determine the corresponding analytic section, see Theorem 2C, Ch. 2, \cite{Hasumi}.

We denote by $\mathcal{H}_q(\Omega_K, \zeta)$  the multiplicative analytic functions $F$ whose character is $\zeta$ for which $|F|^q$ has a harmonic majorant and $\mathcal{H}_\infty(\Omega_K, \zeta)$ means $|F|$ is bounded. It is not difficult to see that, $1\leq p\leq q \leq \infty$ implies that $\mathcal{H}_q(\Omega_K)\subset \mathcal{H}_p(\Omega_K)$.

As in the proof of Theorem 1.4 in \cite{Chszy} we need the function $B_{\Omega_K}$ to prove Theorem \ref{ratfreq}. We can find a local harmonic conjugate to $-g_{\Omega_K}(z)$ for each $z$. Therefore, the equation $|B_{\Omega_K}(z)|= e^{-g_{\Omega_K}(z)}$ determines a multiplicative analytic function on $\Omega_K$ up to a multiplicative constant. We fix it by requiring 

\begin{equation}\label{green}
B_{\Omega_K}(z)= \mathrm{Cap}(K)/z+\mathrm{O}(|z|^{-2})
\end{equation}

near $\infty$.

Let $c$ be a rectifiable curve on $\Omega_K$ such that $c$ winds once around $L\subset K$ and around no other points of $K$, then the 
change of phase of $B_e$ around $c$ is given by $e^{-2\pi i \mu_K(L)}$, see Theorem 2.7 in \cite{Csz1}. Using this we can determine $\zeta_{B_{\Omega_K}}(\cdot)$. Let us denote the character of $B_{\Omega_K}^n$ by $\chi_K^n$ for simplicity. 

Multiplication of two characters $\zeta_1$ and $\zeta_2$ in $\Pi (\Omega_K)^\star$ is defined as pointwise multiplication: $(\zeta_1\zeta_2)(c)=\zeta_1(c) \zeta_2(c)$. This makes $\Pi (\Omega_K)^\star$ an abelian group. Let us equip $\Pi (\Omega_K)$ with discrete topology. Then  $\Pi (\Omega_K)^\star$ is a compact metrizable space with the topology of pointwise convergence since $\Pi (\Omega_K)$ is countable. The map $T\zeta  := \chi_K \zeta$ is ergodic with respect to the Haar measure if and only if $\{\chi_K^n\}_{n=-\infty}^\infty$ is dense in $\Pi (\Omega_K)^\star$, see 
Theorem 1.9 in \cite{walters}. If $\{\chi_K^n\}_{n=-\infty}^\infty$ is dense then $\{\chi_K^n\}_{n=0}^\infty$ is also dense, see p. 132 in \cite{walters}. This fact is used in the proof of Theorem \ref{ratfreq}.

When $K\subset \mathbb{R}$,  $\{\chi_K^n\}_{n=-\infty}^\infty$ is dense in $\Pi(\Omega_K)^*$ if and only if the following condition is satisfied, see Section 1 in \cite{Chszy}:
Suppose that for each  decomposition $K= K_1 \cup \ldots \cup K_l$  into closed disjoint sets and rational numbers $\{q_j\}_{j=1}^{l-1}$ we have 

$$\sum_{j=1}^{l-1} q_j \mu_K(K_j)\neq 0.$$

Let us review the results of Widom in \cite{widom2} on asymptotics of orthogonal polynomials on Jordan curves. We say that $\gamma: [0,1]\rightarrow \mathbb{C}$ is a Jordan curve if it simple and closed. A rectifiable Jordan curve $\gamma$ is $C^{2+}$ if $\gamma$ is $C^2$, $\gamma^\prime (t)\neq 0$ and the second derivative of $\gamma$ satisfies a Lipschitz condition with some positive exponent.

Let $\Gamma = \{\Gamma_1, \ldots, \Gamma_p\}$ denote the union of a finitely many $C^{2+}$ Jordan curves which are mutually exterior to each other. We denote the critical points of $g_{\Omega_\Gamma}$ by $z_1^\star,\ldots ,z_{p-1}^\star$ counting multiplicity. Let $ds$ denote the arc measure on $\Gamma$. Then 
\begin{equation*}
d\mu_\Gamma(s)= \frac{1}{2\pi} \frac{\partial g_{\Omega_\Gamma}(s)}{\partial \textbf{n}} ds,
\end{equation*}

where $\textbf{n}$ denotes the inner normal derivative relative to $\Omega_K$, see p. 121 in \cite{saff}.

Let $\mu(s)= f(s) ds$ be a unit Borel measure on $\Gamma$ such that 

\begin{equation}\label{diri2}
\int_\Gamma \log{f}\, d\mu_\Gamma > -\infty.
\end{equation}

Let $R(z)$ denote the multiplicative analytic function on $\Omega_\Gamma$, without any zeros or poles and having non-tangential boundary values $ |R(z)|= f(z)$. By \eqref{diri2}, $H_{\Omega_\Gamma}(\cdot;\log{f})$  is a harmonic function on $\Omega_\Gamma$. It has (non-tangential) boundary values $\log{f}$ since the boundary is $C^{1+}$ (see Theorem 3B, Ch4 \cite{Hasumi} and Theorem 5.1 in \cite{taylor}) . We can find a local harmonic conjugate for $H_{\Omega_\Gamma}(\cdot;\log{f})$ at each $z\in \Omega_\Gamma$. Thus, $R$ can be written as

$$R(z)= \exp{[H_{\Omega_\Gamma}(z;\log{f})+ i {\tilde{H}_{\Omega_\Gamma}}(z;\log{f})]}.$$

Here, $R$ is determined up to a constant of absolute value $1$. We can uniquely determine $R$ by requiring $R(\infty)>0$. Thus (p. 155 in \cite{widom2}), $\log{R{(\infty)}}= H_{\Omega_\Gamma}(\infty;\log{f})$ and 

$$R(\infty)= \exp\left(\int_\Gamma \log{f}\, d\mu_\Gamma\right).$$

At each $z\neq w$ in $\Omega_\Gamma$ we can find a harmonic conjugate $\tilde{g}_{\Omega_\Gamma}(z;w)$ for $g_{\Omega_\Gamma}(z;w)$. Let $\Phi(z,w) := \exp[g_{\Omega_\Gamma}(z;w)+i \tilde{g}_{\Omega_\Gamma}(z;w)]$. We define $\Phi(z):=\Phi(z;\infty)$.


Let us define $f_0$ as follows:
\begin{equation}\label{supereq1}
 d\mu(s)= f(s) ds= f_0(s)\frac{\partial g_{\Omega_\Gamma}(s)}{\partial \textbf{n}} ds= 2\pi f_0(s) d\mu_{\Gamma}(s).
\end{equation}
That is,

\begin{equation}\label{supereq2}
f_0(s)= \frac{1}{2\pi}\frac{d\mu}{d\mu_{\Gamma}}(s).
\end{equation}

By \eqref{supereq1}, $R$ can be written as a product of two multiplicative  analytic functions whose boundary values (of absolute value) are $f_0$ and $\frac{\partial g_{\Omega_\Gamma}}{\partial \textbf{n}} $ respectively.

We denote the multiplicative analytic function without any zeros or poles on $\Omega_\Gamma$ which on $\Gamma$  has absolute value $f_0$ by $R_0$. Since $\frac{\partial g_{\Omega_\Gamma}}{\partial \textbf{n}}$ is bounded (see e.g. Corollary 4.7, Ch. 4 in \cite{garnett}) by positive numbers from below and above on $\Gamma$, by the assumption \eqref{diri2} and \eqref{supereq1},  it is given by the formula
\begin{equation}
R_0(z)= \exp{[H_{\Omega_\Gamma}(z;\log{f_0})+ i {\tilde{H}_{\Omega_\Gamma}}(z;\log{f_0})]}.
\end{equation}

We require that $R_0(\infty)>0$. Thus,
\begin{equation}\label{import}
R_0(\infty)= \exp\left(\int_\Gamma \log{f_0}\, d\mu_\Gamma\right).
\end{equation}

The multiplicative analytic function without zeros or poles in $\Omega_\Gamma$ which on $\Gamma$  has absolute value $\frac{\partial g_{\Omega_\Gamma}}{\partial \textbf{n}} $ is 
$\Phi^\prime(z)\prod_1^{p-1} \Phi(z; z_j^\star)$, see p. 175 in \cite{widom2}. Hence 

$$R(z)= R_0(z) \Phi^\prime(z)\prod_1^{p-1} \Phi(z; z_j^\star).$$

We define 
$$ \nu(f,\zeta):= \displaystyle \inf_F \int_\Gamma |F(s)|^2 f(s) ds$$

where infimum is taken among the functions in $\mathcal{H}_2(\Omega_\Gamma, \zeta)$ such that $|F(z)^2 R(z)|$ has a harmonic majorant in $\Omega_\Gamma$.
Then (see p. 176 in \cite{widom2})

\begin{equation*}
\nu(f,\zeta)= 2\pi R_0(\infty)\exp\{\sum_{j=1}^{p-1}g_{\Omega_\Gamma}(z_j^\star)-\sum_{j=1}^{p-1}\epsilon_j g_{\Omega_\Gamma}(z_j)\}
\end{equation*}

where $\epsilon=\mp 1$ and $z_j$ are uniquely determined by a set of equations, see p. 169 in \cite{widom2}. But the infimum (keeping $f$ fixed) of $\nu(f,\zeta)$ is assumed when $z_j=z_j^\star$ and $\epsilon_j=1$ for $j=1,\ldots,p-1$, see p. 171.

Therefore

\begin{equation}\label{import2}
\inf_\zeta \nu(f,\zeta)= 2\pi R_0(\infty).
\end{equation}
If we denote $\frac{d\mu}{d\mu_{\Gamma}}$ by $h$ then by \eqref{supereq2}, \eqref{import}, \eqref{import2} we get

\begin{equation*}
\inf_\zeta \nu(f,\zeta)= \exp{\left\{ \int \log{h}\, d\mu_{\Omega_\Gamma} \right\} }.
\end{equation*}

We have the following lower bound (see p. 216 in \cite{widom2}) for the $W_n$'s:

\begin{equation*}
(W_n(\mu))^2\geq \nu(f,\chi_\Gamma^n).
\end{equation*}

Hence for all $n\in\mathbb{N}$, we have

\begin{equation}\label{harika}
(W_n(\mu))^2\geq \nu(f,\chi_\Gamma^n)\geq \inf_\zeta \nu(f,\zeta)= \exp{\left\{ \int \log{h}\, d\mu_{\Omega_\Gamma} \right\} }.
\end{equation}

Hence the lower bound in Theorem \ref{maintheo} is compatible with the lower bound obtained in Widom's paper \cite{widom2} on a system of $C^{2+}$ Jordan curves.

Let $K$ be a non-polar compact subset of $\mathbb{C}$. Then we call $\{K_n\}_{n=1}^\infty$ a $C^{2+}$ exhaustion of $\Omega_K$ if $(K_n)_{n+1}^\infty$ is increasing sequence of domains such that

\begin{enumerate}[(a)]
	\item $\partial K_n$ consists of finitely many non-intersecting $C^{2+}$ Jordan curves.
	\item $\overbar{K_n} \subset {K_{n+1}}$.
	\item $\cup K_n= \Omega_K$.
\end{enumerate}

We can find always find a $C^{2+}$ exhaustion for $\Omega_K$, see VII. 4.4 in \cite{conway} or Ch.2, 12D  in \cite{ahlfors}. The following result concerning the harmonic measures is used next section, see Theorem 10.9 in Section 21.11 in \cite{Conway2} for the proof:

\begin{theorem}\label{weak}
	Let $K$ be a non-polar compact subset of $\mathbb{C}$ and $(K_n)_{n=1}^\infty$ be a $C^{2+}$ exhaustion of $\Omega_K$. Then $w_{K_n}(z;\cdot)\rightarrow w_{\Omega_K}(z;\cdot)$ in the weak-star sense.
\end{theorem}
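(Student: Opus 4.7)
The plan is to realize both harmonic measures as the laws of exit locations of planar Brownian motion and to deduce weak-star convergence from almost-sure convergence of these exit locations. Fix $z \in \Omega_K \setminus \{\infty\}$ (the case $z = \infty$ reduces to this by an inversion); let $(B_t)_{t\ge 0}$ be standard planar Brownian motion with $B_0 = z$, and set $\tau_n := \inf\{t : B_t \in \partial K_n\}$ and $\tau := \inf\{t : B_t \in \partial \Omega_K\}$. Since $\mu_K$ is a probability measure supported on $\partial \Omega_K$, that set is non-polar, so recurrence of planar Brownian motion gives $\tau < \infty$ almost surely. The standard probabilistic description of harmonic measure then yields, for every bounded continuous $f$,
$$\int f\, dw_{K_n}(z;\cdot) = \mathbb{E}_z[f(B_{\tau_n})], \qquad \int f\, dw_{\Omega_K}(z;\cdot) = \mathbb{E}_z[f(B_{\tau})],$$
so by bounded convergence it suffices to prove $B_{\tau_n} \to B_\tau$ almost surely.

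The nesting $K_n \subset K_{n+1} \subset \Omega_K$ forces $(\tau_n)$ to be non-decreasing and bounded above by $\tau$; let $\tau_\infty := \lim_n \tau_n$. The crux is $\tau_\infty = \tau$. For each $\epsilon > 0$, the set
$$F_\epsilon := \{w \in \Omega_K \cap \mathbb{C} : \mathrm{dist}(w,\partial\Omega_K) \ge \epsilon\} \cup \{\infty\}$$
is a compact subset of $\Omega_K$ inside $\overline{\mathbb{C}}$, hence is contained in some $K_N$ by the definition of a $C^{2+}$ exhaustion. Since $\partial K_n \cap K_n = \emptyset$, it follows that $\partial K_n \subset \{w : \mathrm{dist}(w,\partial\Omega_K) < \epsilon\}$ for all $n \ge N$. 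Path continuity then forces $B_{\tau_\infty} = \lim_n B_{\tau_n} \in \partial \Omega_K$, so $\tau_\infty \ge \tau$ and hence $\tau_\infty = \tau$. Continuity of $B$ at $\tau$ finally delivers $B_{\tau_n} \to B_\tau$ almost surely, and bounded convergence completes the proof.

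I expect the main technical obstacle to be the ``$\partial K_n$ approaches $\partial \Omega_K$'' step, where one must work in $\overline{\mathbb{C}}$ rather than in $\mathbb{C}$ to rule out components of $\partial K_n$ drifting toward $\infty$; handling $\infty$ as an interior point of $\Omega_K$ via the one-point compactification turns this into a clean compactness argument. If one preferred a purely potential-theoretic route, an alternative is to extract a locally uniform limit of the Dirichlet solutions $H_{K_n}(\cdot; f|_{\partial K_n})$ via Harnack's principle and identify the limit with $H_{\Omega_K}(\cdot; f)$ through a barrier argument at each regular point of $\partial \Omega_K$, noting that the set of irregular points is polar and hence $\mu_K$-null.
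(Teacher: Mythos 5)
Your proof is correct, and it takes a genuinely different route from the paper's: the paper does not prove this statement itself but cites Conway's potential-theoretic treatment, in which harmonic measure is the Perron--Wiener--Brelot solution of the Dirichlet problem and the convergence along an increasing exhaustion is extracted from the behaviour of the Dirichlet solutions $H_{K_n}(\cdot;f)$ --- essentially the ``alternative'' you sketch in your closing sentence. Your probabilistic argument replaces all of that with the almost-sure convergence $B_{\tau_n}\to B_\tau$, and its two key steps are sound: the monotonicity $\tau_n\uparrow\tau_\infty\le\tau$ follows from $\overline{K_n}\subset K_{n+1}\subset\Omega_K$, and the geometric step that $\partial K_n$ is eventually contained in every $\epsilon$-neighbourhood of $\partial\Omega_K$ is exactly the right compactness argument, using that $F_\epsilon$ is compact in $\overline{\mathbb{C}}$, that $F_\epsilon\subset K_N$ for some $N$, and that $\partial K_n\cap K_n=\emptyset$ while $\partial K_n\subset\Omega_K$. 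What your approach buys is a short, transparent proof valid for arbitrary non-polar $K$ with no regularity assumption on $\partial\Omega_K$; what it costs is the external input that for a Greenian domain the analytically defined harmonic measure coincides with the exit distribution of planar Brownian motion (Kakutani--Doob), which is where the suppressed potential theory lives and which you should cite explicitly, since the paper's $w_{\Omega_K}(z;\cdot)$ is defined via the Dirichlet problem. Two small points to tighten: for $z=\infty$ the inversion must be centred at a point $a\in\overline{\mathbb{C}}\setminus\Omega_K$, and since such an $a$ may lie on $\partial\Omega_K$ the weak-star convergence is most cleanly transported as convergence of probability measures on the sphere $\overline{\mathbb{C}}$; and you should record that $\infty\in K_n$ for all large $n$, so that $\partial K_n\subset\Omega_K\cap\mathbb{C}$ and the exit time of $K_n$ for planar Brownian motion really is the hitting time of $\partial K_n$.
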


\section{Proofs}
\begin{proof}[Proof of Theorem \ref{maintheo}]
	Let $P_n(z):= \prod_{j=1}^n (z-\tau_j)$. Then
	\begin{align}
	\left(\int |P_n|^2 h\, d\mu_K+\int |P_n|^2 d\mu_s\right)^{1/2} &\geq \left(\int |P_n|^2 h\, d\mu_K \right)^{1/2}\label{babba11}\\
	&= e^{\log \left(\int |P_n|^2 h\, d\mu_K\right)^{1/2}}\label{babba}\\		
	&\geq e^{\int \log{(|P_n| h^{1/2})}\,d \mu_K } \label{baba1} \\
	&= e^{\frac{1}{2} \int \log{h}\, d\mu_K }\,\, e^{\int \sum_{j=1 }^n \log |z-\tau_j| d\mu_K(z)} \label{baba2}\\
	&\geq e^{\frac{1}{2} \int \log{h}\, d\mu_K }\,\, \mathrm{Cap(K)}^n \label{baba3}.
	\end{align}
	
	Here, \eqref{baba1} follows from Jensen's inequality and \eqref{baba3} holds since
	$$\int \log|z-\tau|\,d\mu_K(z) \geq \log{\mathrm{Cap}(K)}$$ for all $\tau\in\mathbb{C}$ by Frostman's theorem, see Theorem 3.3.4 (a) in \cite{Ransford}. We obtain the desired inequality by squaring the left hand side of \eqref{babba11} and \eqref{baba3} and using the assumption that $\mathrm{Cap(supp(\mu))}= \mathrm{Cap(supp(\mu_K))}$.

\end{proof}

\begin{proof}[Proof of Corollary \ref{equi}]
	We obtain $\inf_{n\in\mathbb{N}} W_n(\mu_K)\geq 1$ by letting $h\equiv 1$ and $\mu_s=0$ in Theorem \ref{maintheo}.
	
	The proof of the second part of the corollary is quite straightforward. Since $|z|=1$ on the unit circle, we get $\int |z|^{2n} d\mu_{\mathbb{T}}(z)=1$ for all $n$. In addition, $P_n(z;\mathbb{T})= z^n$ and $\mathrm{Cap}({\mathbb{T}})=1$. Thus $W_n(\mu_\mathbb{T})=1$ for all $n\in\mathbb{N}$.
\end{proof}

The following characterization of the Parreau-Widom condition is due to Widom (see Theorem 1 in \cite{Widomhp} and also Section 2B, in Ch. 5 in \cite{Hasumi}):

\begin{theorem}\label{widddd}
	Let $K$ be a regular compact subset of $\mathbb{C}$. Then $\Omega_K$ is a Parreau-Widom domain if and only if $\mathcal{H}_2(\Omega_K, \zeta)\neq \{0\}$ for all $\zeta\in \Pi(\Omega_K)^*$.
\end{theorem}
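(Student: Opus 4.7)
I would work on the universal cover $\pi : \mathbb{D} \to \Omega_K$ with Fuchsian deck transformation group $G$; under the identification $\Pi(\Omega_K)^\star \cong \mathrm{Hom}(G,\mathbb{T})$, a nonzero $F \in \mathcal{H}_2(\Omega_K,\zeta)$ corresponds to a nonzero $\tilde F \in H^2(\mathbb{D})$ with $\tilde F\circ \gamma = \zeta(\gamma)\tilde F$ for all $\gamma \in G$. The basic bounded multiplicative building blocks on $\Omega_K$ are the Green-function unit $B_{\Omega_K}$ from \eqref{green} and, for each critical point $c$ of $g_{\Omega_K}$, the elementary factor $B(\cdot,c)$ with $|B(z,c)|=e^{-g_{\Omega_K}(z,c)}$; each lies in $\mathcal{H}_\infty$ and carries an explicit character determined by the phase-change computation recalled in Section 2.

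For $\mathrm{PW}(K)<\infty \Rightarrow \mathcal{H}_2(\Omega_K,\zeta)\neq\{0\}$ for every $\zeta$: evaluated at $z_0 = \infty$, the cover identity $g_{\Omega_K}(z_0,c) = \sum_{w \in \pi^{-1}(c)} \log(1/|w|)$ shows that $\sum_n g_{\Omega_K}(c_n)$ is the Blaschke defect of $\pi^{-1}(\{c_n\}) \subset \mathbb D$, so the PW hypothesis is exactly the Blaschke condition on this lifted sequence. Hence for any subset $S$ of critical points the product $\prod_{n\in S} B(\cdot,c_n)$ converges in $\mathcal{H}_\infty$ with some character $\zeta_S$, and the subgroup generated by $\{\chi_K\} \cup \{\zeta_S\}$ is dense in $\Pi(\Omega_K)^\star$ by the structure theory of characters on plane domains (Ch. 2 of \cite{Hasumi}). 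For an arbitrary $\zeta$, I would approximate $\zeta$ by characters $\zeta_k$ of this form, form the corresponding unit-norm sections $F_k \in \mathcal H_\infty \subset \mathcal H_2$, and extract a weak-$\star$ limit in $H^2(\mathbb D)$ via Montel; non-degeneracy of the limit is secured by a uniform lower bound at a chosen base point in $\mathbb D$.

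For the converse I would use the $C^{2+}$ exhaustion $(K_n)$ from Section 2 together with Widom's explicit finite-connectivity calculation of $\nu(f,\zeta)$ summarized in \eqref{harika}. On each $K_n$ the infimum of $|F(\infty)|^2$ over unit-norm sections of $\mathcal H_2(K_n,\zeta)$ carries a factor $\exp\{-2\sum_j g_{K_n}(z_j^\star)\}$ coming from the critical points $z_j^\star$ of $g_{K_n}$. Under the hypothesis $\mathcal H_2(\Omega_K,\zeta) \neq \{0\}$ for every $\zeta$, restricting realizers to $K_n$ and averaging against the Haar measure on the compact group $\Pi(\Omega_K)^\star$ would produce a bound on $\sum_j g_{K_n}(z_j^\star)$ that is uniform in $n$; passing to the limit via Theorem \ref{weak} for harmonic measures and the convergence of critical points and Green values under exhaustion then yields $\mathrm{PW}(K)<\infty$.

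The main obstacle is the converse: turning pointwise-in-$\zeta$ non-vanishing of $\mathcal H_2$-sections into a quantitative, character-averaged bound on the critical-value sums requires the cohomological duality on $H^1(\Omega_K;\mathbb{T})$ developed by Widom in \cite{Widomhp} and reworked in Chapter 5 of \cite{Hasumi}; I would import that framework rather than reprove it. A secondary but nontrivial difficulty, in the forward direction, is upgrading density of the realized characters to coverage of every $\zeta$ while preventing the weak-$\star$ limit from collapsing to the zero section; this is a standard but delicate normal-families argument on the disk.
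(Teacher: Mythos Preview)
The paper does not prove Theorem \ref{widddd}; it is simply stated with attribution to Widom (Theorem~1 in \cite{Widomhp}, and Section~2B of Chapter~5 in \cite{Hasumi}) and then used as a black box in the proof of Theorem~\ref{ratfreq}. So there is no ``paper's own proof'' to compare against.

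That said, a few remarks on your plan. For the forward direction, the step ``the subgroup generated by $\{\chi_K\}\cup\{\zeta_S\}$ is dense in $\Pi(\Omega_K)^\star$'' together with a normal-families limit is not how Widom proceeds and is genuinely risky: density of realized characters plus weak-$*$ compactness does not by itself prevent the limit section from vanishing identically, and your proposed ``uniform lower bound at a chosen base point'' is exactly the hard part, not an afterthought. Widom's actual argument in \cite{Widomhp} constructs, for \emph{every} character directly (not by approximation), a bounded section via an infinite product/theta-type construction whose convergence is precisely the Parreau--Widom sum condition; the density step is unnecessary.

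For the converse you essentially concede the point by writing that you would ``import that framework rather than reprove it,'' i.e.\ invoke \cite{Widomhp} and Chapter~5 of \cite{Hasumi}. That is exactly what the paper does. Your Haar-averaging heuristic is suggestive but, as stated, does not yield the uniform-in-$n$ bound you need: nonvanishing of $\mathcal{H}_2(\Omega_K,\zeta)$ for each $\zeta$ gives no a priori quantitative lower bound that survives integration over the character group, and it is precisely this uniformity that encodes $\mathrm{PW}(K)<\infty$. So the converse in your plan is not a proof sketch but a pointer to the same references the paper already cites.
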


For a multivalued function $F\in \mathcal{H}_p(\Omega_K)$ we denote the least harmonic majorant for $|F|^p$ by $\mathrm{LHM}(|F|^p)(\cdot).$ The function $|F|^p$ is subharmonic. Let $K_n$ be a $C^{2+}$ exhaustion of $\Omega_K$. Then as a consequence of Harnack's theorem (see Theorem 1.3.9 in \cite{Ransford}), we get (see e.g. eq. (2.1.2) in \cite{rudin})
\begin{equation*}
\mathrm{LHM}(|F|^p)(z)=\lim_{n\rightarrow\infty} \int |F|^p dw_{K_n}(z;\cdot).
\end{equation*}

In addition if $|F|^p$ can be extended continuously to $\overbar{\Omega_K}$ then $\mathrm{LHM}(|F|^p)(z)= H_{\overline{\Omega_K}}(z;|F|^p)$. since $dw_{ K_n}(z;\cdot).\rightarrow dw_{\Omega_K}(z;\cdot).$

In the proof of Theorem \ref{ratfreq} we use ideas from Theorem 1.4 in \cite{Chszy} and from the proof of Theorem in 5A, Ch. 5 (the main arguments of the proof can also be found in Theorem 3 in \cite{Widomhp}).) in \cite{Hasumi}.

\begin{proof}[Proof of Theorem \ref{ratfreq}]
Let $M:=\sup_n {W_n(\mu_K)}$ and $\chi\in \Pi(\Omega_K)^*$. Then there is a subsequence $n_j\rightarrow\infty$ such that $\chi_K^ {n_j}\rightarrow \chi$.
	Let $F_j(z):= \frac{P_{n_j}(z;\mu_K) B_{\Omega_K}^{n_j}(z)}{\mathrm{Cap}(K)^{n_j}}$. For each $j$, $F_j\in \mathcal{H}_{\infty}(\Omega_K)$ because by the maximum principle
	$$ \|F_j\|_{\Omega_K}\leq \sup_{z\rightarrow \partial \Omega_K}|F_j(z)|= \sup_{z\in \partial \Omega_K}{|P_{n_j}(z;\mu_K)|} \mathrm{Cap}(K)^{-n_j}.$$
	Hence we also have $F_j\in \mathcal{H}_2(\Omega_K)$. Note that $|F_j|^2$ can be extended continuously to $\overbar{\Omega_K}$ since $K$ is regular. Thus,
	\begin{equation}\label{bound}
	\mathrm{LHM}(|F_j|^2)(\infty)= \frac{\int |P_{n_j}(z;\mu_K)|^2 d\mu_K}{\mathrm{Cap}(K)^{2 n_j}}= (W_{n_j}(\mu_K))^2\leq M^2.
	\end{equation}

	Let $\{V_\alpha^\prime\}$ be an open covering of $\Omega_K$ such that each $V_\alpha^\prime$ is  simply connected,  \\$V_\alpha^\prime=V_\alpha^\prime\cap \Omega_K$ and if $V_{\alpha_1}^\prime\cap V_{\alpha_2}^\prime \neq \emptyset$ then $V_{\alpha_1}^\prime\cup V_{\alpha_2}^\prime$ is included in a simply connected subset of $\Omega_K$. Let  $\{V_\alpha\}$ be a refinement of $\{V_\alpha^\prime\}$ such that it is countable, each $\overline{V_\alpha}$ is compact and $\overline{V_\alpha}\subset {V_\alpha^\prime}$ for each $\alpha$. We can find (by the same argument used in p. 133 in \cite{Hasumi} for $f_\alpha$) a representative $(\{F_{j\alpha}\}, \{V_\alpha^\prime\})$ of the analytic section corresponding to $F_j$. Since $\mathrm{LHM}(|F_j|^2)(\cdot)$ is a positive harmonic function on $\Omega_K$ and the inequality \eqref{bound} holds, in view of Harnack's inequality, $(\mathrm{LHM}(|F_j|^2)(\cdot))_{j=1}^\infty$ is uniformly bounded  above by a positive number on each $V_\alpha$. By Montel's theorem, we can find a subsequence of $(F_{{j\alpha}})_{j=1}^\infty$ which is uniformly convergent on $V_\alpha$. Since $\{V_\alpha\}$ is a countable covering, by a diagonalization argument, we can find a subsequence $\{F_{j(k)}: k=1, 2 \ldots\}$ such that, for each $\alpha$, $\{F_{j(k),\alpha}: k=1,2,\ldots \}$ converges uniformly on ${V_\alpha}$. We denote the limit of this subsequence on $V_\alpha$ by $f_\alpha$. It is clear that $(\{f_{\alpha}\}, \{V_\alpha\})$ represents an analytic section. Let  $f$ be the corresponding multivalued analytic function. Then $\zeta_{f}(c)= \lim_{k\rightarrow} \zeta_{F_{j(k)}}(c)= \chi(c)$. 
	
    Note that $|F_{j(k)}(\infty)|=1$ for all $k$ by \eqref{green}. Therefore
    
    \begin{equation}\label{alp}
    |f(\infty)|=1.
    \end{equation}

	 It remains to show that $|f|^2$ has a harmonic majorant. Note that $|F_{j(k)}(z)| \rightarrow |f(z)|$ uniformly on each compact subset of $\Omega_K$. Fix a positive integer $n$. Let $\epsilon>0$. Then there is a number $k_0$ with $n< j(k_0)$ such that 
	 
	 \begin{equation}\label{epsli}
	 ||F_{j(k)}(z)|^2- |f(z)|^2|<\epsilon
	 \end{equation}
	 is satisfied on $\partial K_n$ for all $k\geq k_0$.
	 
	 Let us denote the least harmonic majorant of a function $G$ restricted to a region $E$ by $\mathrm{LHM}_E (G)(\cdot)$. Then by \eqref{epsli}, for each $k\geq k_0$,
	 
	 \begin{equation*}
	 \mathrm{LHM}_{K_n} (|f|^2) (z) \leq \mathrm{LHM}_{K_n} (|F_{j(k)}|^2 +\epsilon)(z) \leq \mathrm{LHM}_{K_n} (|F_{j(k)}|^2) (z) + \epsilon
	 \end{equation*}
	 for $z\in K_n$.
	 
	 Clearly,  $\mathrm{LHM}_{K_n} (|F_{j(k)}|^2) (z) \leq \mathrm{LHM} (|F_{j(k)}|^2) (z)$. Since $\epsilon$ is arbitrary, we get 
	 
	 \begin{equation*}
	  \mathrm{LHM}_{K_n} (|f|^2) (z) \leq \limsup_{k\rightarrow\infty } \mathrm{LHM} (|F_{j(k)}|^2) (z).
	 \end{equation*}
	 
	 By \eqref{bound} and Harnack's inequality, there is a constant $C(z)$ depending only on $z$ such that $\limsup_{k\rightarrow\infty } \mathrm{LHM} (|F_{j(k)}|)^2 (z)\leq C(z) M^2$. Hence 
	 $\mathrm{LHM}_{K_n} (|f|^2) (z) \leq C(z)M^2$. Since $n$ is arbitrary, $\mathrm{LHM}_{K_r} (|f|^2) (z) \leq C(z)M^2$ for all $r\in\mathbb{N}$.
	 
	  For any fixed $z$, let $l$ be an integer such that $z\in K_l$. Then $(\mathrm{LHM}_{K_n} (|f|^2) (z))_{n=l}^\infty$ is an increasing sequence bounded by $C(z) M^2$. Let $H(z):=\lim _{n\rightarrow\infty} (\mathrm{LHM}_{K_n} (|f|^2) (z))$. Then by Harnack's theorem (see Theorem 1.3.9 in \cite{Ransford}) $H$ is a harmonic function on $\Omega_K$. Clearly $ \mathrm{LHM} (|f|^2) (z) \leq H(z) \leq C(z) M^2$. Thus $f$ is in $\mathcal{H}_2(\Omega_K,\chi)$. It is also non-zero by \eqref{alp}. Since $\chi$ is arbitrary, this proves that $\Omega_K$ is a Parreau-Widom domain by Theorem \ref{widddd}.
	 
		\end{proof}

\begin{proof}[Proof of Corollary \ref{ratfreq2}]
	
	Suppose that $(W_n(\mu_K))_{n=1}^\infty$ is bounded. Then by Theorem \ref{ratfreq}, $\Omega_K$ is Parreau-Widom and by Theorem 1.4 in \cite{Csz1}, $(M_{n,K})_{n=1}^\infty$ is bounded. This proves the first implication.
	
	Suppose that $(M_{n,K})_{n=1}^\infty$ is bounded. Note that 
	
	\begin{equation}\label{chebbb}
		\|P_n(\cdot;\mu_K)\|_{L^2(\mathbb{C}; \mu_K )}\leq \|T_{n,K}\|_{L^2(\mathbb{C}; \mu_K) } \leq \|T_{n,K}\|_K.
	\end{equation}

	The inequality on the left follows from \eqref{infli} and for the second inequality, we refer the reader to the proof of Corollary 1.2 in \cite{simon1}. Thus,
	
	$W_n(\mu_K)\leq M_n(K)$ and this implies that  $(W_n(\mu_K))_{n=1}^\infty$ is also bounded.

\end{proof}

\section*{Acknowledgement}
I would like to thank Barry Simon for the helpful suggestions regarding Theorem \ref{maintheo}. 

I am grateful to Vilmos Totik for sharing his observations on the proof of Theorem \ref{maintheo}. It was his his idea to use Frostman's theorem in the proof.

\section*{Appendix}
We give another proof of Theorem \ref{maintheo} with $\mu_s=0$ using a completely different method. 

To prove the theorem, we use the fact that this result holds on a system of Jordan curves, see \eqref{harika}. We combine some of the results obtained in \cite{widom2}. In \cite{widom2}, the Radon-Nikodym derivative with respect to the equilibrium measure is considered to explore the properties of the Szeg\H{o} kernel $K(z,z_0)$, see p. 175. What is new, here, is  the observation that once the asymptotics of orthogonal polynomials are expressed in terms of the Radon-Nikodym derivative with respect to the equilibrium measure (instead of the arc measure) then the lower bound for the $W_n$'s is given in terms of the Szeg\H{o} integral and it is independent of the Parreau-Widom sum. 

Replacing the arc measure by the equilibrium measure of a system of Jordan curves enables us to approximate a measure supported on an arbitrary regular compact subset of $\mathbb{C}$ by measures supported on Jordan curves. Besides, the same lower bound holds for the Szeg\H{o} integrals of these measures, see Lemma \ref{temellem}.
\subsection*{Proof of Theorem \ref{maintheoo}: Continuous Case}

\begin{lemma} \label{temellem}
	Let $K$ be a regular compact subset of $\mathbb{C}$ and let $f$ be a non-negative continuous real-valued function on $\partial\Omega_K$ and let $\mu$ be a unit Borel measure on $\partial\Omega_K$ such that $\mu = f\, d\mu_K$. Suppose that $M:=\int \log{f}\, d\mu_K >-\infty.$  Let $(K_n)_{n=1}^\infty$ be a $C^{2+}$ exhaustion of $\Omega_K$. Then the sequence of measures $\mu_n:= H_{\Omega_K}(z;f)_{|\partial K_n}d\mu_{\partial K_n}$ satisfies the following properties:
	\begin{enumerate}[(a)]
		\item $\mu_n\rightarrow \mu$ (weak star)
		\item $\int \log{H_{\Omega_K}(z;f)_{|\partial K_n}}\, d\mu_{\partial K_n}\geq M$.

	\end{enumerate}
\end{lemma}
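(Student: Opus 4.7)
The plan is to prove the two parts separately: part (a) will follow from the weak-star convergence of harmonic measures (Theorem \ref{weak}) together with the boundary regularity of the Dirichlet solution, while part (b) rests on Jensen's inequality applied to the concave function $\log$, combined with the mean value property on $K_n$.

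For part (a), I would first observe that $d\mu_{\partial K_n} = dw_{K_n}(\infty;\cdot)$, since $K_n$ is the unbounded component of $\overline{\mathbb{C}}\setminus\partial K_n$ (it contains $\infty$) and the boundary is a system of Jordan curves. Since $K$ is regular and $f$ is continuous on $\partial\Omega_K$, the preliminaries guarantee that $H_{\Omega_K}(\cdot;f)$ extends continuously to $\overbar{\Omega_K}$ as $H_{\overbar{\Omega_K}}(\cdot;f)$ with boundary values $f$. Given a bounded continuous test function $\phi$ on $\mathbb{C}$, Tietze's extension theorem produces a bounded continuous function $\Psi$ on $\mathbb{C}$ that equals $\phi\cdot H_{\Omega_K}(\cdot;f)$ on $\overbar{\Omega_K}$ and, in particular, equals $\phi\cdot H_{\Omega_K}(\cdot;f)$ on $\partial K_n$ for all sufficiently large $n$ and equals $\phi\cdot f$ on $\partial\Omega_K$. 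Therefore
\begin{equation*}
\int\phi\,d\mu_n=\int\Psi\,dw_{K_n}(\infty;\cdot)\;\longrightarrow\;\int\Psi\,dw_{\Omega_K}(\infty;\cdot)=\int\phi f\,d\mu_K=\int\phi\,d\mu,
\end{equation*}
where the convergence uses Theorem \ref{weak} and the identification $\mu_K=w_{\Omega_K}(\infty;\cdot)$.

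For part (b), I apply Jensen's inequality to the concave function $\log$ with the probability measure $dw_{\Omega_K}(z;\cdot)$ on $\partial\Omega_K$:
\begin{equation*}
\log H_{\Omega_K}(z;f)=\log\int f\,dw_{\Omega_K}(z;\cdot)\;\geq\;\int\log f\,dw_{\Omega_K}(z;\cdot)=H_{\Omega_K}(z;\log f),
\end{equation*}
for every $z\in\Omega_K$; the right-hand side is a well-defined harmonic function on $\Omega_K$ because $M>-\infty$ forces $\log f$ to be integrable against $dw_{\Omega_K}(\infty;\cdot)=d\mu_K$, and the propagation result from the preliminaries then gives integrability against $dw_{\Omega_K}(z;\cdot)$ at every $z\in\Omega_K$. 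Integrating this pointwise inequality against $dw_{K_n}(\infty;\cdot)$ over $\partial K_n$ and using that $H_{\Omega_K}(\cdot;\log f)$ is harmonic on $\Omega_K$ and continuous on the compact set $\overbar{K_n}\subset\Omega_K$, the mean value property yields
\begin{equation*}
\int_{\partial K_n}H_{\Omega_K}(z;\log f)\,dw_{K_n}(\infty;z)=H_{\Omega_K}(\infty;\log f)=\int\log f\,d\mu_K=M,
\end{equation*}
which is the desired lower bound.

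The point I expect to require the most care is justifying the mean value identity in part (b) in light of the fact that $\log f$ can take the value $-\infty$. This is handled by the hypothesis $M>-\infty$, which is exactly what is needed for $H_{\Omega_K}(\cdot;\log f)$ to be a bona fide harmonic function on $\Omega_K$; its restriction to the compact set $\overbar{K_n}$ is then automatically continuous and bounded, so the representation $u(\infty)=\int u\,dw_{K_n}(\infty;\cdot)$ applies without qualification. A secondary, minor concern is the behavior at $\infty$ in part (a), which is resolved by noting that $\partial K_n\subset\Omega_K$ lies in a fixed bounded subset of $\mathbb{C}$ for $n$ large, so only the behavior of $\phi$ on a fixed compactum containing $\partial\Omega_K$ and all the $\partial K_n$ is relevant.
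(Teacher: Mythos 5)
Your proof is correct and follows the paper's argument essentially verbatim: part (a) via the continuity of $H_{\overbar{\Omega_K}}(\cdot;f)$ on $\overbar{\Omega_K}$ combined with Theorem \ref{weak}, and part (b) via Jensen's inequality $\log H_{\Omega_K}(z;f)\geq H_{\Omega_K}(z;\log f)$ followed by integrating over $\partial K_n$ and evaluating at $\infty$ (the paper phrases this last step as uniqueness of the solution of the Dirichlet problem on $K_n$, which is the same mean-value identity you invoke). The additional details you supply --- the identification $d\mu_{\partial K_n}=dw_{K_n}(\infty;\cdot)$ and the Tietze extension of the test function --- only make the same argument more explicit.
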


\begin{proof}
	\begin{enumerate}[(a)]
		\item Since $f$ is continuous and $K$ is regular, $H_{\overbar{\Omega_K}}(\cdot;f)$ is continuous on $\overbar{\Omega_K}$. Hence $\mu_n\rightarrow \mu$ follows from Theorem \ref{weak}.
		
		\item Fix $n\in\mathbb{N}$.	Let $f_n(z):={H_{\Omega_K}(z;f)_{|\partial K_n}}$. Note that ${H_{\Omega_K}(\infty;f)}=\int f d\mu_K=1$ since $\mu$ is a unit measure. Since  $H_{\Omega_K}(\cdot;f)$ is harmonic on $K_n$ and it is continuous on $K_n\cup \partial K_n$ we have $H_{K_n}(z;f_n)= {H_{\Omega_K}(z;f)}$ for $z\in K_n$ by  uniqueness of the solution of the Dirichlet problem. This implies that $\int f_n d\mu_{\partial K_n} =H_{K_n}(\infty;f_n)= H_{\Omega_K}(\infty;f)=1$. Hence $\mu_n$ is a unit Borel measure for each $n\in\mathbb{N}$.
		
		Since $M>-\infty$, we have $f>0$ $\mu_K$ a.e. and thus $f>0$ $w_{\Omega_K}(z;\cdot)$ a.e. for any $z\in \Omega_K$, see e.g. Corollary 4.3.5 in \cite{Ransford}. Let $G(z):=  H_{\Omega_K}(z;\log{f})$ on $\Omega_K$. Using the uniqueness of the solution of the Dirichlet problem as in the above paragraph, we see that $\int G_{|\partial K_n} d\mu_{\partial K_n}= \int \log{f}\, d\mu_K=M$. In view of Jensen's inequality, we get
		\begin{equation}\label{aaaa}
		G(z)= \int_{\partial \Omega_K} \log{f}  dw_{\Omega_K}(z;\cdot)\leq \log \int_{\partial \Omega_K}  f dw_{\Omega_K}(z;\cdot)=\log H_{\Omega_K}(z;f)
		\end{equation}
		
		for $z\in \Omega_K$. Integrating the restrictions of the first and the last terms to $\partial K_n$ in \eqref{aaaa} with respect to $\mu_{\partial K_n}$,
		
		\begin{equation*}
		M=\int G_{|\partial K_n} d\mu_{\partial K_n}\leq \int \log f_n d\mu_{\partial K_n},
		\end{equation*}
		we prove part $(b)$.
	\end{enumerate}

\end{proof}
Let us prove Theorem \ref{maintheoo} for continuous $h$:

\begin{theorem} \label{maintheo2}
	Let $K$ be a regular compact subset of $\mathbb{C}$ and let $\mu$ be a unit Borel measure on $\partial\Omega_K$ such that 
	\begin{itemize}
		\item $d\mu= h \,d{\mu}_K$ where $h$ is a continuous real valued function on $\partial \Omega_K$.
		\item $M:=\int \log{h}\, d\mu_K>-\infty.$

	\end{itemize}
	
	Then $\inf_{s\in\mathbb{N}} (W_s(\mu))^2 \geq e^M.$
\end{theorem}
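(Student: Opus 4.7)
The plan is to reduce to the case of a measure on a system of $C^{2+}$ Jordan curves, where \eqref{harika} already supplies exactly the lower bound we want, and then pass to the limit using the approximation from Lemma \ref{temellem}. Fix a $C^{2+}$ exhaustion $\{K_n\}$ of $\Omega_K$, whose existence is recorded in Section 2. Since each $K_n$ is a connected domain containing $\infty$, the finitely many $C^{2+}$ Jordan curves making up $\partial K_n$ bound disjoint bounded components of $\mathbb{C} \setminus K_n$, so they are mutually exterior and $\partial K_n$ is exactly of the type treated by Widom in the setup leading to \eqref{harika}.

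Let $f_n := H_{\Omega_K}(\cdot;h)_{|\partial K_n}$ and $\mu_n := f_n \, d\mu_{\partial K_n}$, as in Lemma \ref{temellem}. Since $f_n$ is the Radon--Nikodym derivative of $\mu_n$ with respect to $\mu_{\partial K_n}$, the inequality \eqref{harika} applied with $\Gamma = \partial K_n$, combined with Lemma \ref{temellem}(b), yields
\[
(W_s(\mu_n))^2 \geq \exp\Bigl(\int \log f_n \, d\mu_{\partial K_n}\Bigr) \geq e^M
\]
for every $s, n \in \mathbb{N}$. The theorem will then follow once I show that $W_s(\mu_n) \to W_s(\mu)$ as $n \to \infty$ for each fixed $s$; this reduces to proving the two limits $\|P_s(\cdot;\mu_n)\|_{L^2(\mathbb{C};\mu_n)} \to \|P_s(\cdot;\mu)\|_{L^2(\mathbb{C};\mu)}$ and $\mathrm{Cap}(\partial K_n) \to \mathrm{Cap}(K)$.

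For the $L^2$ convergence, observe that for $n$ sufficiently large $K_n$ contains a fixed neighborhood of $\infty$, so all the supports $\partial K_n$ lie in a fixed compact subset of $\mathbb{C}$. The zeros of the monic minimizer $P_s(\cdot;\mu_n)$ lie in the convex hull of $\mathrm{supp}(\mu_n)$, hence in a fixed compact set, so along a subsequence $P_s(\cdot;\mu_n) \to Q$ uniformly on compacta for some monic polynomial $Q$ of degree $s$. Lemma \ref{temellem}(a) gives $\mu_n \to \mu$ weak-star; applied to the continuous function $|Q|^2$ this yields $\|P_s(\cdot;\mu_n)\|_{L^2(\mu_n)}^2 \to \|Q\|_{L^2(\mu)}^2 \geq \|P_s(\cdot;\mu)\|_{L^2(\mu)}^2$, while testing the infimum defining $\|P_s(\cdot;\mu_n)\|_{L^2(\mu_n)}$ against the fixed competitor $P_s(\cdot;\mu)$ provides the matching $\limsup$. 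For the capacity, the key point is that $\partial K_n$ encloses $K$ and the exterior Green functions $g_{\Omega_{\partial K_n}}(\cdot;\infty)$ increase in $n$ to $g_{\Omega_K}(\cdot;\infty)$ on $\Omega_K$; reading off the Robin constants from the expansions at $\infty$ then gives $\mathrm{Cap}(\partial K_n) \to \mathrm{Cap}(K)$. The main obstacle I expect is this last convergence, together with the careful check that the exhaustion approaches $\partial\Omega_K$ uniformly from outside. Once these are in hand, the two limits together with the uniform lower bound $(W_s(\mu_n))^2 \geq e^M$ deliver $(W_s(\mu))^2 \geq e^M$ for every $s$, giving the infimum bound.
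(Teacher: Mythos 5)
Your proposal is correct and follows essentially the same route as the paper: exhaust $\Omega_K$ by $C^{2+}$ domains, apply Widom's bound \eqref{harika} together with Lemma \ref{temellem} to the measures $\mu_n$ to get $(W_s(\mu_n))^2\geq e^M$, and pass to the limit using the weak-star convergence $\mu_n\rightarrow\mu$. The only difference is that you establish the full convergence $W_s(\mu_n)\rightarrow W_s(\mu)$ (via Fej\'er's theorem, Montel, and continuity of capacity along a decreasing sequence of compacts), whereas the paper gets by with the two one-sided inequalities $\mathrm{Cap}(\partial K_n)\geq \mathrm{Cap}(K)$ and $\|P_s(\cdot;\mu_n)\|_{L^2(\mathbb{C};\mu_n)}\leq \|P_s(\cdot;\mu)\|_{L^2(\mathbb{C};\mu_n)}$, which make the limit passage immediate and render your extra steps unnecessary (though not incorrect).
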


\begin{proof}
	Let $(K_n)_{n=1}^\infty$ be a $C^{2+}$ exhaustion of $\Omega_K$. Let $\mu_n$ be as in the statement of Lemma  \ref{temellem} and $s\in\mathbb{N}$. Then
	\begin{align}
	(W_s(\mu))^2 &= \frac{\int |P_s(\cdot;\mu)|^2 d\mu}{\mathrm{Cap}(K)^{2s}}\\		
	&=\lim_{n\rightarrow\infty} \frac{\int |P_s(\cdot;\mu)|^2 d\mu_n}{\mathrm{Cap}(K)^{2s}} \label{a1} \\
	&\geq \limsup_{n\rightarrow\infty} \frac{\int |P_s(\cdot;\mu)|^2 d\mu_n}{\mathrm{Cap}(\partial K_n)^{2s}}\label{a2}\\
	&\geq \limsup_{n\rightarrow\infty} (W_s(\mu_n))^2\label{a3}\\
	&\geq e^{M}.\label{a4}
	\end{align}
	where \eqref{a1} follows from Lemma \ref{temellem} (a), \eqref{a2} follows from monotonicity of capacity, \eqref{infli} implies \eqref{a3} and \eqref{harika} implies \eqref{a4}. This completes the proof.
\end{proof}

\subsection*{Proof of Theorem \ref{maintheoo}: General Case}

\begin{theorem} \label{maintheo3}
	Let $K$ be a regular compact subset of $\mathbb{C}$ and let $\mu$ be a unit Borel measure on $\partial\Omega_K$ such that 
	\begin{itemize}
		\item $d\mu= h \,d{\mu}_K$ where $h$ is a function bounded below (almost everywhere with respect to $\mu_K$) by a number $C>0$ on $\partial\Omega_K$.
		\item $M:=\int \log{h}\, d\mu_K.$

	\end{itemize}
	
	Then $\inf_{s\in\mathbb{N}} (W_s(\mu))^2 \geq e^M.$
\end{theorem}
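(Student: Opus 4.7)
The plan is to reduce Theorem \ref{maintheo3} to the continuous case already proved in Theorem \ref{maintheo2} by approximating $h$ in $L^1(\mu_K)$ by continuous functions that remain bounded below by $C$, and then passing to the limit in both the Szeg\H{o} integral and the polynomial extremal problem.

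First I would observe that $M$ is automatically finite: the a.e.\ bound $h\geq C$ gives $\log h\geq \log C$, while $\int h\,d\mu_K=1$ combined with $\log t\leq t-1$ gives $M\leq 0$. Next, using density of $C(\partial\Omega_K)$ in $L^1(\mu_K)$, I would pick continuous $g_n\to h$ in $L^1(\mu_K)$ and replace them by $h_n:=\max(g_n,C)$; these are continuous, still satisfy $h_n\geq C$, and continue to converge to $h$ in $L^1(\mu_K)$ because $|\max(g_n,C)-\max(h,C)|\leq|g_n-h|$. Passing to a subsequence with $\|h_n-h\|_{L^1(\mu_K)}\leq 2^{-n}$, I obtain simultaneously $h_n\to h$ $\mu_K$-a.e.\ and an integrable envelope $G:=h+\sum_n|h_n-h|\in L^1(\mu_K)$ dominating every $h_n$.

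Setting $c_n:=\int h_n\,d\mu_K\to 1$, the measures $\mu_n:=(h_n/c_n)\,d\mu_K$ are unit Borel measures with continuous density uniformly bounded below; their support equals $\partial\Omega_K$, so $\mathrm{Cap}(\mathrm{supp}(\mu_n))=\mathrm{Cap}(K)$, and Theorem \ref{maintheo2} applies to give
\[ (W_s(\mu_n))^2 \geq \exp\!\left(\int \log h_n\,d\mu_K - \log c_n\right). \]
Dominated convergence, with $|\log h_n|\leq|\log C|+G$, sends the right-hand side to $e^M$. For the left-hand side, fix any monic $Q\in\mathcal{P}_s$: dominated convergence again (with dominator $\|Q\|_{\partial\Omega_K}^2\,G/c_n$, eventually $\leq 2\|Q\|_{\partial\Omega_K}^2\,G$) shows $\int|Q|^2\,d\mu_n\to\int|Q|^2\,d\mu$, hence $\int|Q|^2\,d\mu\geq\liminf_n(W_s(\mu_n))^2\,\mathrm{Cap}(K)^{2s}$. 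Taking the infimum over monic $Q$ of degree $s$ via \eqref{infli} gives $(W_s(\mu))^2\geq\liminf_n(W_s(\mu_n))^2\geq e^M$, uniformly in $s\in\mathbb{N}$.

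The main technical point is arranging the $L^1$-convergence of the $h_n$ so that a single dominating function $G$ drives dominated convergence for both the Szeg\H{o} integrals $\int\log h_n\,d\mu_K$ and the polynomial norms $\int|Q|^2 h_n\,d\mu_K$; the uniform lower bound $h_n\geq C$ is what keeps $\log h_n$ controllable, and the upper envelope $G\in L^1(\mu_K)$ is what keeps both integrals convergent. No structure beyond measurability and the positive lower bound is needed for $h$, which is precisely what makes the continuous-approximation route work in this generality.
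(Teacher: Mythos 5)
Your proof is correct and follows essentially the same route as the paper: approximate $h$ in $L^1(\mu_K)$ by continuous functions truncated below at $C$, normalize to unit measures, apply Theorem \ref{maintheo2}, and pass to the limit in both the Szeg\H{o} integral and the extremal $L^2$ problem via \eqref{infli}. The only cosmetic difference is that the paper deduces $\int \log h_n\,d\mu_K \to M$ from the Lipschitz continuity of $\log$ on $[D,\infty)$ together with $L^1$-convergence, rather than from your subsequence-plus-integrable-envelope dominated convergence argument.
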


\begin{proof}
	There is a sequence $(q_n)_{n=1}^\infty$ of lower semicontinuous   functions on $\partial \Omega_K$ such that $q_n\geq h$ and  $\lim_{n\rightarrow\infty} \int |q_n-h| d\mu_K= 0$, see eq. (3.8) in Appendix A.3, \cite{saff}. For each $n$ there is an increasing sequence of real valued continuous functions $(f_{n,k})_{k=1}^\infty$ on $\partial \Omega_K$ such that $f_{n,k}(z)\rightarrow q_n(z)$ pointwise as $k\rightarrow\infty$, see e.g. p.1, \cite{saff}. By the monotone convergence theorem 
	$$\lim_{k\rightarrow\infty}\int |f_{n,k} - q_n | \,d\mu_K=0$$
	for each $n$. Therefore, we can find a sequence of real valued continuous functions $(f_{n, k(n)})_{n=1}^\infty$ on $\partial \Omega_K$ such that 
	$$ \lim_{n\rightarrow\infty} \int |h- f_{n, k(n)}|\, d\mu_K=0.$$
	
	Let  $t_n(z):= \max\{f_{n, k(n)}(z), C\}$ at each $z\in \partial\Omega_K$. Then $t_n$ is continuous on $ \partial\Omega_K$ and 
	
	\begin{equation}\label{muthis}
	\int |t_n-h| d\mu_K\rightarrow 0
	\end{equation}
	
	as $n\rightarrow \infty$. Besides, $$L_n:=\int t_n\, d\mu_K\rightarrow 1$$ as $n\rightarrow\infty$. 
	Let $h_n:= t_n/L_n$. Then $\eta_n:=h_n d\mu_K$ is a unit Borel measure for each $n\in\mathbb{N}$ and \\ $\int |t_n-h_n| d\mu_K\rightarrow 0$. By \eqref{muthis}, this implies that $\lim_{n\rightarrow\infty} \int |h_n-h| d\mu_K= 0$. Thus $h_n d\mu_K\rightarrow hd\mu_K$ in the weak star sense.  There is a  $D>0$ such that both $h$ and each $h_n$ are bounded below by $D$ on $ \partial\Omega_K$. It follows that
	\begin{equation}\label{muthis2}
	\lim_{n\rightarrow\infty} \int \log{h_n}\, d\mu_K=  \int \log{h}\, d\mu_K
	\end{equation}
	since $\log$ is Lipschitz continuous on $[D,\infty)$. 
	
	Let $s\in\mathbb{N}$.
	Then
	
	\begin{align}
	(W_s(\mu))^2 &= \frac{\int |P_s(\cdot;\mu)|^2 d\mu}{\mathrm{Cap}(K)^{2s}}\\		
	&=\lim_{n\rightarrow\infty} \frac{\int |P_s(\cdot;\mu)|^2 d\eta_n}{\mathrm{Cap}(K)^{2s}} \label{b1} \\
	&\geq \limsup_{n\rightarrow\infty} (W_s(\eta_n))^2\label{b2}\\
	&\geq e^{M}.\label{b3}
	\end{align}
	Here, \eqref{b1} follows from the fact that $\eta_n\rightarrow \mu$,   \eqref{b2} follows from  \eqref{infli} and \eqref{b3} follows from \eqref{muthis2} and Theorem \ref{maintheo2}. This completes the proof.
\end{proof}

\begin{theorem} \label{maintheoo}
	Let $K$ be a regular compact subset of $\mathbb{C}$ and let $\mu$ be a unit Borel measure supported on $\partial\Omega_K$. Let $h$ be a non-negative measurable function on $\partial \Omega_K$ such that 
	\begin{itemize}
		\item $d\mu= h \,d{\mu}_K$.
		\item $M:=\int \log{h}\, d\mu_K >-\infty.$
		
	\end{itemize}
	
	Then $\inf_{s\in\mathbb{N}} (W_s(\mu))^2 \geq e^M.$
\end{theorem}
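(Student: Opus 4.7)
The plan is to extend Theorem \ref{maintheo3} from densities bounded below by a positive constant to arbitrary non-negative densities satisfying the Szeg\H{o} integrability hypothesis, by a truncation-and-limit argument that parallels the passage from continuous to bounded-below densities carried out in Theorem \ref{maintheo3}.

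First I would regularize the density from below. Define $h_n(z) := \max\{h(z), 1/n\}$ on $\partial\Omega_K$, set $L_n := \int h_n\,d\mu_K$ and $\tilde h_n := h_n/L_n$, so that $\eta_n := \tilde h_n\,d\mu_K$ is a unit Borel measure on $\partial\Omega_K$ whose density is bounded below a.e.\ by $1/(nL_n) > 0$. Theorem \ref{maintheo3} immediately gives
$$\inf_{s\in\mathbb{N}}(W_s(\eta_n))^2 \geq \exp\left(\int \log \tilde h_n\,d\mu_K\right)$$
for every $n$.

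Next I would verify that $\int \log \tilde h_n\,d\mu_K \to M$. Because $h_n \downarrow h$ pointwise and $h_n \leq h+1 \in L^1(\mu_K)$, dominated convergence gives $L_n \to 1$, so $\log L_n \to 0$. For the logarithmic part, the sequence $\log h_n = \max(\log h,-\log n)$ decreases pointwise to $\log h$; the positive parts satisfy $(\log h_n)_+ \leq h_n \leq h+1$, while the negative parts are dominated by $(\log h)_-$, which is $\mu_K$-integrable precisely by the hypothesis $M > -\infty$. Dominated convergence therefore yields $\int \log h_n\,d\mu_K \to M$, and hence $\int \log \tilde h_n\,d\mu_K \to M$.

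Finally I would compare $W_s(\mu)$ with $W_s(\eta_n)$ for each fixed $s$. Since $P_s(\cdot;\mu)$ is a fixed polynomial bounded on the compact set $\partial\Omega_K$, the pointwise convergence $\tilde h_n \to h$ together with the domination $|P_s(\cdot;\mu)|^2\tilde h_n \leq \|P_s(\cdot;\mu)\|_{\partial\Omega_K}^2(h+1) \in L^1(\mu_K)$ yields $\int|P_s(\cdot;\mu)|^2\,d\eta_n \to \int|P_s(\cdot;\mu)|^2\,d\mu$ by dominated convergence. The minimality property \eqref{infli} applied to $\eta_n$ gives $\int|P_s(\cdot;\eta_n)|^2\,d\eta_n \leq \int|P_s(\cdot;\mu)|^2\,d\eta_n$, and since $\mathrm{supp}(\eta_n) = \partial\Omega_K$ has capacity $\mathrm{Cap}(K)$, we conclude
$$W_s(\mu)^2 = \lim_{n\to\infty}\frac{\int|P_s(\cdot;\mu)|^2\,d\eta_n}{\mathrm{Cap}(K)^{2s}} \geq \limsup_{n\to\infty}(W_s(\eta_n))^2 \geq \limsup_{n\to\infty} e^{\int \log \tilde h_n\,d\mu_K} = e^M.$$
Since $s$ is arbitrary, this finishes the argument. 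The main obstacle is the uniform $\mu_K$-integrability of $\log h_n$ in the second step: this is exactly where the Szeg\H{o} hypothesis $M > -\infty$ enters essentially, since without it the lower truncation would produce logarithms whose integrals diverge to $-\infty$, collapsing the estimate to the trivial bound.
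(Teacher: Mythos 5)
Your proposal is correct and follows essentially the same route as the paper's own proof of this theorem: regularize $h$ from below (the paper uses $t_n = h + 1/n$ where you use $\max\{h,1/n\}$), normalize to a unit measure with density bounded below, invoke Theorem \ref{maintheo3}, and pass to the limit via the extremality property \eqref{infli}. The only difference is cosmetic --- you prove full convergence $\int \log \tilde h_n\,d\mu_K \to M$ by dominated convergence, whereas the paper settles for the one-sided bound $\limsup_n \int \log\frac{t_n}{1+1/n}\,d\mu_K \geq M$, which is all that is needed.
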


\begin{proof}
	Let $s\in\mathbb{N}$.
	Let $t_n(z):= h(z)+ 1/n$. Then
	\begin{equation}\label{mli}
	\int \log{t_n} d\mu_K\geq M.
	\end{equation}
	
	and
	
	\begin{equation}\label{monot}
	\lim_{n\rightarrow\infty}  \int {t_n} |P_s(\cdot;\mu)|^2 d\mu_K =  \int  |P_s(\cdot;\mu)|^2 h\, d\mu_K.
	\end{equation}
	
	Let $\mu_n:= \frac{t_n}{1+1/n} d\mu_K$.Then $\frac{t_n}{1+1/n} $ is bounded below by $\frac{1/n} {1+1/n}$ and $\mu_n$ is a unit Borel measure for each $n$. By \eqref{monot},
	
	\begin{equation}\label{monot2}
	\lim_{n\rightarrow\infty}  \int |P_s(\cdot;\mu)|^2\, d\mu_n=  \int  |P_s(\cdot;\mu)|^2 h\, d\mu_K.
	\end{equation}
	
	In addition, we have 
	
	\begin{equation*}
	\limsup_{n\rightarrow\infty} \int \log{\frac{t_n}{1+1/n}}\, d\mu_K\geq M.
	\end{equation*}
	by \eqref{mli}. Thus, in view of Theorem \ref{maintheo3}, we get
	
	\begin{equation}\label{mli3}
	\limsup_{n\rightarrow\infty} (W_s (\mu_n))^2\geq e^M.
	\end{equation}
	
	Using \eqref{monot2} in \eqref{c2}, \eqref{infli} in \eqref{c3} and \eqref{mli3} in \eqref{c4} we deduce that
	
	\begin{align}
	(W_s(\mu))^2 &=\frac{ \int  |P_s(\cdot;\mu)|^2 h\, d\mu_K}{\mathrm{Cap}(K)^{2s}}\\
	&= \lim_{n\rightarrow\infty}\frac{ \int |P_s(\cdot;\mu)|^2\, d\mu_n}{\mathrm{Cap}(K)^{2s}}\label{c2}\\
	&\geq  \limsup_{n\rightarrow\infty} (W_s (\mu_n))^2\label{c3}\\
	&\geq e^M.\label{c4}
	\end{align}
	The proof is complete.
\end{proof}

\end{document}